\documentclass[11pt]{article}
\usepackage{graphicx}
\usepackage[margin=0.5in]{geometry}
\usepackage{bm}
\usepackage{amsthm}
\usepackage{amscd}
\usepackage{amsbsy}
\usepackage{amsmath}
\usepackage{amsfonts}
\usepackage{amssymb}
\usepackage{calligra}
\usepackage{xcolor}
\usepackage{float}
\usepackage[T1]{fontenc}
\usepackage{multirow}
\usepackage{mathrsfs}
\usepackage{mathtools}
\usepackage{epstopdf}
\usepackage{psfrag}
\usepackage{subfigure}
\usepackage{booktabs}
\usepackage{listings}
\usepackage{longtable}
\usepackage{latexsym}
\usepackage{arydshln}
\usepackage{enumerate}
\usepackage{epsfig}
\usepackage{cite}
\usepackage{verbatim}
\usepackage{overpic}
\usepackage{abstract}
\allowdisplaybreaks[4]

\date{}
\textwidth 6.4in \oddsidemargin 0.1in \evensidemargin 0.1in
\title{The necessary and sufficient conditions for the real Jacobian conjecture}
\author{Yuzhou Tian and Yulin Zhao\footnote{Corresponding author. E-mail address: tianyzh3@mail2.sysu.edu.cn (Y. Tian), mcszyl@mail.sysu.edu.cn (Y. Zhao).}\\
\it\footnotesize School of Mathematics (Zhuhai),\ Sun Yat-sen University,\ Zhuhai\ 519082, P.R.\ China}

\newtheorem {theorem*}{Theorem}
\newtheorem {theorem} {Theorem}
\newtheorem{example}{Example}

\newtheorem{proposition}{Proposition}
\newtheorem{lemma}{Lemma}

\newtheorem{remark}{Remark}

\numberwithin{equation}{section}

\usepackage[colorlinks,
            CJKbookmarks=true,
            linkcolor=blue,
            anchorcolor=red,
             urlcolor=blue,
            citecolor=blue
            ]{hyperref}

\begin{document}
\maketitle
\noindent {\bf Abstract} We focus on investigating the real Jacobian conjecture. This conjecture claims that if $F=\left(f^1,\ldots,f^n\right):\mathbb{R}^n\rightarrow \mathbb{R}^n$ is a polynomial map such that $\det DF$ is nowhere zero, then $F$ is a global injective.

This paper consists of two parts. The first part is to study  the two-dimensional real Jacobian conjecture via the method of the qualitative theory of dynamical systems. We provide some necessary and sufficient conditions such that the two-dimensional real Jacobian conjecture holds. By Bendixson compactification, an induced polynomial differential system can be obtained from the Hamiltonian system associated to polynomial map $F$. We prove that the following statements are equivalent: (A) $F$ is a global injective; (B) the origin of induced system is a center; (C) the origin of induced system is a monodromic singular point; (D) the origin of induced system has no hyperbolic sectors; (E) induced system has a $C^k$ first integral with an isolated minimun at the origin and $k\in\mathbb{N}^{+}\cup\{\infty\}$. The above conditions (B)-(D) are local dynamical conditions. Moreover, applying the above results we present a necessary and sufficient condition for the validity of the two-dimensional real Jacobian conjecture, which is an algebraic criterion. By definition a criterion function, $F$ is a global injective if and only if the limit of criterion function is infinite as $\left|x\right|+\left|y\right|$ tends to infinity. This algebraic criterion improves the main result of Braun et al [J. Differential Equations {\bf 260} (2016) 5250-5258]. 

In the second part, the necessary and sufficient conditions on the $n$-dimensional real Jacobian conjecture is obtained. Using the tool from the nonlinear functional analysis, $F$ is a global injective if and only if $\parallel F\left(\mathbf{x}\right)\parallel$ approaches to infinite as $\parallel\mathbf{x}\parallel\rightarrow\infty$, which is a generalization of  the above  algebraic criterion.  As an application, we give an alternate proof of the Cima's result on the $n$-dimensional real Jacobian conjecture [Nonlinear Anal. {\bf 26} (1996) 877-885]. 
\smallskip

\noindent {\bf 2020 Math Subject Classification: } Primary 14R15.  Secondary 08B30.  Tertiary 34C05

\smallskip

\noindent {\bf Key words and phrases:} {Real Jacobian conjecture; Monodromy; Bendixson compactification; Criterion function}

\section{Introduction and main results}\label{se-1}
Let $F\left(\mathbf{x}\right)=\left(f^1\left(\mathbf{x}\right),\ldots,f^n\left(\mathbf{x}\right)\right):\mathbb{R}^n\rightarrow\mathbb{R}^n$ be a smooth map with the Jacobian determinant $\det DF\left(\mathbf{x}\right)\neq0$  for all $\mathbf{x}=\left(x_1,\ldots,x_n\right)\in\mathbb{R}^n$. Obviously, the map $F$ is a local diffeomorphism. However, it is not always global injective in $\mathbb{R}^n$. Actually, one can impose suitable conditions to guarantee that $F$ is a global diffeomorphism,  see for example \cite{cobo2002injectivity,fernandes2004global,Braun2017On,plastock1974homeomorphisms} and references therein.

In algebraic geometry, the well-known \emph{Jacobian conjecture} is to state that if $F:\mathbb{C}^n\rightarrow\mathbb{C}^n$ is a polynomial map with $\det DF$ \emph{a non-zero constant}, then $F$ is a global injective. This conjecture was first introduced by Keller in 1939, and up to now it is still open problem. Smale \cite{MR1631413} in 1998 listed Jacobian conjecture as the 16th of 18 great mathematical problems for the 21th century. For Jacobian conjecture there are many positive partial results, see \cite{MR714106,MR3866897,MR1487631,MR2247890,MR3556520,MR3278901,MR714105,MR3652577}, etc. The investigation of Jacobian conjecture leads to a stream of valuable results concerning polynomial  automorphisms, as shown in survey \cite{bass1982jacobian} and book \cite{van2012polynomial}, etc.

From now on, we consider the polynomial map $F$ defined in $\mathbb{R}^n$. Another famous conjecture, the \emph{real Jacobian conjecture} claims that if $F:\mathbb{R}^n\rightarrow\mathbb{R}^n$ is a polynomial map with \emph{nonvanishing Jacobian determinant}, then $F$ is a global injective, see \cite{MR713265}. Unfortunately, this conjecture is false. In 1994, Pinchuk \cite{MR1292168} provided a counterexample which is a non-injective polynomial map $F$ in $\mathbb{R}^2$ with nonvanishing Jacobian determinant.  Nevertheless, the real Jacobian conjecture has still attracted the interest of numerous mathematicians, especially exploring conditions such that this conjecture holds. Based on the structure of polynomial map $F$, the authors in \cite{cima1995global,MR1362759} give sufficient conditions. Gwo\'{z}dziewicz in \cite{MR1839866} obtained that the two-dimensional real Jacobian conjecture (i.e., the polynomial map $F=\left(f,g\right)$) holds if the degrees of $f$ and $g$ are less than or equal to $3$. Braun et al. \cite{MR2552779,MR3514314} generalized this result by showing that the conjecture is true if the degree of $f$ is at most $4$, independently of the degree of $g$. In the above mentioned papers, the main technique relates algebra, analysis and geometry.

In the first part of this paper, we study the two-dimensional real Jacobian conjecture. To describe our main results, we first introduce some notation. Consider a two-dimensional autonomous differential system
\begin{align}\label{eq-3}
&\dot{x}=P\left(x,y\right),\quad\dot{y}=Q\left(x,y\right).
\end{align}
Denoted by $\mathscr{X}=\left(P,Q\right)$ the vector field associated to system \eqref{eq-3}. The vector field $\mathscr{X}$ is $C^k$ with $k\in \mathbb{N}^+\cup\{\infty\}$ if $P(x,y)$ and $Q(x,y)$ are $C^k$. Let $U$ be an open set of $\mathbb{R}^2$. A non-locally constant function $H: U\rightarrow \mathbb{R}$ is called a \emph{first integral} of $\mathscr{X}$ if it is constant along any solution curve of $\mathscr{X}$ contained in $U$. We denote by
$b\left(\mathscr{X}\right)$ the \emph{Bendixson compactification} (see subsection \ref{sub-1}) of vector field $\mathscr{X}$. In particular, if $P\left(x,y\right),Q\left(x,y\right)$ are real polynomials in variables $x$ and $y$ with $d=\max\{\text{deg}P,\text{deg}Q\}$, the expression of $b\left(\mathscr{X}\right)$ is given by
\begin{align*}
\begin{cases}
\dot{u}=\left(u^2+v^2\right)^d\left[\left(v^2-u^2\right)P\left(\dfrac{u}{u^2+v^2},\dfrac{v}{u^2+v^2}\right)-2uvQ\left(\dfrac{u}{u^2+v^2},\dfrac{v}{u^2+v^2}\right)\right],\\
\specialrule{0em}{3pt}{3pt}
\dot{v}=\left(u^2+v^2\right)^d\left[\left(u^2-v^2\right)Q\left(\dfrac{u}{u^2+v^2},\dfrac{v}{u^2+v^2}\right)-2uvP\left(\dfrac{u}{u^2+v^2},\dfrac{v}{u^2+v^2}\right)\right],
\end{cases}
\end{align*}
see subsection \ref{sub-1} for more details.

Let $q\in\mathbb{R}^2$ be a singular point of an analytic vector field $\mathscr{X}$ in $\mathbb{R}^2$. The singular point $q$ is \emph{monodromic} if there exists a neighborhood of $q$ such that the orbits of the vector field turn around $q$ either in forward or in backward time. We say that the singular point $q$ is a \emph{center} if there is a neighborhood of $q$ which is filled up with periodic orbits. The \emph{period annulus} of the center $q$ is the maximal neighbourhood $U$ of $q$ such that all the orbits contained in $U\setminus\{q\}$ are periodic. The center $q$ is a \emph{global center} if its period annulus is the whole $\mathbb{R}^2$. A singular point $q$ is called a \emph{focus} if all orbits in a neighborhood of $q$ spirally approach this singular point either in forward or in backward time.

Sabatini \cite{MR1636592}  gave the following dynamical result.
\begin{theorem}\label{th-1}
  Let $F=\left(f\left(x,y\right),g\left(x,y\right)\right):\mathbb{R}^2\rightarrow \mathbb{R}^2$ be a polynomial map with nowhere zero Jacobian determinant such that $F\left(0,0\right)=\left(0,0\right)$. Then the following statements are equivalent.
\begin{itemize}
\item [(a)] The origin is a global center for the Hamiltonian polynomial vector field
    \begin{align}\label{eq-1}
&\mathcal{X}\triangleq\left(-ff_y-gg_y,ff_x+gg_x\right).
    \end{align}
    \item [(b)] $F$ is a global diffeomorphism of the plane onto itself.
 \end{itemize}
\end{theorem}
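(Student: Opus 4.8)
The plan is to exploit the fact that $\mathcal{X}$ is the Hamiltonian vector field of $H=\tfrac12(f^2+g^2)=\tfrac12\|F\|^2$, so that the orbits of $\mathcal{X}$ lie on the level sets of $H$. I would begin by recording two elementary but decisive facts. First, the equilibria of $\mathcal{X}$ are the zeros of $\nabla H=(ff_x+gg_x,\,ff_y+gg_y)$, i.e.\ the solutions of $(DF)^{\mathsf T}(f,g)^{\mathsf T}=0$; since $\det DF\neq0$ this forces $f=g=0$, so the singular points of $\mathcal{X}$ coincide with $F^{-1}(0)$. Second, differentiating $f$ and $g$ along the flow gives, after cancellation,
\begin{align*}
\dot f=-g\,\det DF,\qquad \dot g=f\,\det DF,
\end{align*}
so the image point $F(x(t),y(t))$ travels on the circle $\{u^2+v^2=2H\}$ with angular velocity $\dot\theta=\det DF$, which never vanishes and has constant sign on the connected set $\mathbb{R}^2$. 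Thus along every orbit the image winds monotonically around the origin of the target plane.

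For the implication (b)$\Rightarrow$(a) I would argue as follows. If $F$ is a global diffeomorphism then $F^{-1}(0)=\{(0,0)\}$, so the origin is the unique singular point, and for each $c>0$ the level set $\{H=c\}=F^{-1}(\{u^2+v^2=2c\})$ is the diffeomorphic image of a circle, hence a single regular closed curve carrying no equilibrium. A compact regular level curve of a Hamiltonian field free of singular points is a periodic orbit; letting $c$ range over $(0,\infty)$ these orbits fill $\mathbb{R}^2\setminus\{0\}$, so the origin is a global center.

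The harder direction is (a)$\Rightarrow$(b). Assuming a global center, the origin is again the unique zero of $F$ and $\nabla H$ vanishes nowhere on $\mathbb{R}^2\setminus\{0\}$. I would first show that every level set $\{H=c\}$ with $c>0$ is connected: two nested ovals carrying the same value $c$ would bound an annulus on which $H$ either attains an interior extremum (impossible, since $\nabla H\neq0$ there) or is constant (impossible, since $\|F\|$ constant on a planar region forces $\det DF\equiv0$). Hence each level set is a single periodic orbit $\gamma_c$. On $\gamma_c$ the restriction $F|_{\gamma_c}\colon\gamma_c\to\{u^2+v^2=2c\}$ has nonvanishing derivative, because the tangent field $\mathcal{X}$ is nonzero there and $DF(\mathcal{X})=\det DF\cdot(-g,f)\neq0$; thus $F|_{\gamma_c}$ is a covering map of the circle onto the circle, of some nonzero degree $k(c)$. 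By the rotation formula $2\pi k(c)=\int_{0}^{T_c}\det DF\,dt$ this degree depends continuously, hence constantly, on $c$; evaluating on arbitrarily small orbits, where $F$ is approximated by the linear isomorphism $DF(0,0)$, gives $|k|=1$, so $F|_{\gamma_c}$ is a bijection onto its target circle for every $c$. Since distinct orbits map to circles of distinct radii, $F$ is injective on $\mathbb{R}^2$; since each such circle is fully covered, $F$ is onto; and being a local diffeomorphism, $F$ is a global diffeomorphism.

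The main obstacle I anticipate is this last direction: converting the dynamical hypothesis of a global center into genuine global injectivity. Its two technical hearts are the connectedness of the level sets (so that two points with equal image necessarily lie on a common orbit) and the constancy of the winding number $k(c)$ together with its normalization $|k|=1$ coming from the behaviour near the origin. Controlling $k(c)$ uniformly, in particular ruling out a change of the covering degree as $c\to\infty$, is the step most likely to require care.
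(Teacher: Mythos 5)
Your proposal cannot be compared against an internal proof, because the paper does not prove Theorem~\ref{th-1}: it is quoted as Sabatini's result \cite{MR1636592} and used as a black box (the paper's own Theorem~\ref{th-3} is proved by reducing to it). Judged on its own merits, your argument is essentially correct and is close in spirit to Sabatini's original one: the identities $\dot f=-g\det DF$, $\dot g=f\det DF$ --- i.e.\ the image point moves on the circle $\{u^2+v^2=2H\}$ with angular speed $\det DF$ of constant sign --- are exactly the mechanism behind the theorem, and your implication (b)$\Rightarrow$(a) is complete as written.

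Three places in (a)$\Rightarrow$(b) deserve tightening, all fixable by standard facts. (i) Nestedness: you assert that two components of $\{H=c\}$ are nested ovals; justify this by noting that in a global center every orbit is periodic, every periodic orbit of a planar vector field encloses equilibria of total index $1$ (Proposition~\ref{pr-2}), and the only equilibrium is the origin, so any two components both encircle the origin and are therefore nested. (ii) Surjectivity: you need every circle $\{u^2+v^2=2c\}$ to be reached, i.e.\ every $c>0$ to be a value of $H$; this holds because a nonconstant polynomial on $\mathbb{R}^2$ is unbounded (it is nonconstant on some line), so the range of $H$ is all of $[0,\infty)$. (iii) The step you flag as the likely obstacle --- constancy of the covering degree $k(c)$ and its normalization --- can be bypassed entirely: for \emph{every} $c$, the degree of $F|_{\gamma_c}$ onto its image circle equals the winding number of $F$ along the Jordan curve $\gamma_c$ about the origin of the target, which by degree theory equals the sum of the local indices of the zeros of $F$ enclosed by $\gamma_c$; the unique zero is the origin, with index $\mathrm{sign}\det DF(0,0)=\pm1$, so $|k(c)|=1$ outright, with no continuity-in-$c$ argument and no control as $c\to\infty$ needed. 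It is also worth noting that the paper's later machinery yields a much shorter route to (a)$\Rightarrow$(b): a global center forces every sublevel set $\{H\le c\}$ to be compact (it is the region bounded by the periodic orbit $\gamma_c$), hence $\|F\|^2$ is proper, and Hadamard's global inverse function theorem (the paper's Lemma~\ref{le-4}, used there to prove Theorem~\ref{th-16}) then upgrades the local diffeomorphism $F$ to a global one.
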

This theorem provides a \emph{global dynamical condition} such that $F$ is a global injective. Recently, Braun and Llibre proved in \cite{MR3443401} that if the homogeneous terms of higher degree of $f$ and $g$ do not have real linear factors in common and $\text{deg}f=\text{deg}g$, then $F$ is a global injective. Later on, Braun et al. \cite{MR3448779} improved this result in the following theorem.
\begin{theorem}\label{th-2}
  Let $F=\left(f,g\right):\mathbb{R}^2\rightarrow \mathbb{R}^2$ be a polynomial map such that \emph{det} $DF\left(x,y\right)$ is nowhere zero and $F\left(0,0\right)=\left(0,0\right)$. If the higher homogeneous terms of the polynomials $ff_x+gg_x$ and $ff_y+gg_y$ do not have real linear factors in common, then $F$ is a global injective.
\end{theorem}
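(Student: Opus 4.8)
The plan is to reduce the statement to Sabatini's Theorem~\ref{th-1} and then settle the resulting global-center problem by analysing the behaviour of $\mathcal{X}$ at infinity through the Bendixson compactification $b(\mathcal{X})$. First I would record the Hamiltonian structure. Setting $H=\tfrac12\left(f^2+g^2\right)$, one checks that $H_x=ff_x+gg_x$ and $H_y=ff_y+gg_y$, so $\mathcal{X}=\left(-H_y,H_x\right)$ is the Hamiltonian vector field of $H$. Since $F(0,0)=(0,0)$ and $\det DF(0,0)\neq0$, the Hessian of $H$ at the origin equals $DF(0,0)^{\top}DF(0,0)$, which is positive definite; hence the origin is a nondegenerate local minimum of $H$ and a local center of $\mathcal{X}$. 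Because $\nabla H=DF^{\top}(f,g)^{\top}$ and $DF$ is invertible everywhere, the finite singular points of $\mathcal{X}$ are exactly $F^{-1}(0,0)$, and each of them is again a nondegenerate minimum, hence a local center. By Theorem~\ref{th-1} it thus suffices to prove that the origin is a \emph{global} center; equivalently, that it is the only finite singular point and that every level set $\{H=c\}$ with $c>0$ is a single closed curve encircling it.

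Next I would translate the hypothesis into information at infinity. If $H_m$ denotes the top-degree homogeneous part of $H$ (of degree $m$), then the highest homogeneous terms of $H_x$ and $H_y$ are $\partial_x H_m$ and $\partial_y H_m$; by Euler's identity $x\,\partial_x H_m+y\,\partial_y H_m=m H_m$, a real linear form $\ell$ divides both $\partial_x H_m$ and $\partial_y H_m$ precisely when $\ell^{2}\mid H_m$. Hence the assumption of Theorem~\ref{th-2} is equivalent to the statement that every real linear factor of $H_m$ is \emph{simple}. I would then pass to $b(\mathcal{X})$, in which the whole line at infinity of $\mathcal{X}$ collapses to the single singular point at the origin of the $(u,v)$-plane and whose local phase portrait is governed by $H_m$: the characteristic directions at this point are the real linear factors of $H_m$, again via the identity $xQ_{\mathrm{top}}-yP_{\mathrm{top}}=m H_m$ for the leading parts $P_{\mathrm{top}}=-\partial_y H_m$, $Q_{\mathrm{top}}=\partial_x H_m$.

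The heart of the argument, and the step I expect to be the main obstacle, is the local desingularisation of the origin of $b(\mathcal{X})$. This point is generally nilpotent or highly degenerate, so I would blow it up by (iterated, directional or polar) blow-ups and analyse the singularities that appear along the exceptional divisor. The aim is to show that simplicity of each real linear factor of $H_m$ forces every sector at the origin of $b(\mathcal{X})$ to be elliptic or parabolic, so that the point carries \emph{no hyperbolic sector} and is monodromic; a repeated real factor of $H_m$ is exactly the degeneracy that would produce a saddle-like hyperbolic sector, and the hypothesis excludes it. Controlling all the sectors uniformly across the possibly many blow-up charts, and ruling out the degenerate configurations, is where the bulk of the technical work lies.

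Finally, with no hyperbolic sector at infinity and none at any finite singular point, since every finite singularity is a center, I would close the argument by a Poincar\'e--Hopf index count on the Bendixson sphere: as $\chi(S^2)=2$ and each monodromic singular point has index $+1$, the number of finite singular points must equal one. Because $b(\mathcal{X})$ inherits a (nonconstant) first integral from $H$, a monodromic singular point cannot be a focus, so the origin of $b(\mathcal{X})$ is in fact a center; consequently no orbit escapes to infinity and no separatrix can bound the period annulus of the origin. Therefore the origin is the unique finite singular point and a global center, and Theorem~\ref{th-1} yields that $F$ is globally injective.
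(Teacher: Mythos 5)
A preliminary remark: the paper never proves Theorem \ref{th-2} at all --- it is quoted from Braun, Gin\'e and Llibre \cite{MR3448779}, and the paper's own results (Theorems \ref{th-3} and \ref{th-14}) strictly supersede it. So your proposal can only be measured against the paper's general framework, which your skeleton does follow: the Hamiltonian structure $H=\tfrac12(f^2+g^2)$, the fact that all finite singular points of $\mathcal{X}$ are centers (Proposition \ref{pr-1}), the reduction to monodromy of the origin of $b(\mathcal{X})$, and the index count are exactly the ingredients of the proof of Theorem \ref{th-3}.

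The genuine gap is in your translation of the hypothesis to infinity. You claim that the highest homogeneous terms of $ff_x+gg_x=H_x$ and $ff_y+gg_y=H_y$ are $\partial_x H_m$ and $\partial_y H_m$, and hence that the hypothesis of Theorem \ref{th-2} is equivalent to ``every real linear factor of $H_m$ is simple.'' This is false, for two reasons. First, one of $\partial_x H_m$, $\partial_y H_m$ may vanish identically, in which case the actual top term of the corresponding polynomial comes from \emph{lower}-degree parts of $H$. Second, and decisively, $H\geq 0$ forces $H_m\geq 0$, so every real linear factor of $H_m$ automatically has \emph{even} multiplicity; your condition ``all real linear factors simple'' therefore collapses to ``$H_m$ has no real linear factors,'' i.e.\ $H_m$ is positive definite. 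A concrete counterexample to your claimed equivalence: take $f=y+y^3$, $g=x$, so that $\det DF=-(1+3y^2)$ is nowhere zero. The top terms of $ff_x+gg_x=x$ and $ff_y+gg_y=y+4y^3+3y^5$ are $x$ and $3y^5$, with no common real linear factor, so the hypothesis of Theorem \ref{th-2} holds; yet $H_m=\tfrac12 y^6$ has the repeated real factor $y$. Consequently your plan, even if completed, proves only ``$H_m$ positive definite $\Rightarrow F$ injective,'' which is essentially the earlier Braun--Llibre result \cite{MR3443401}; it misses precisely the cases (singular points at infinity present, controlled by lower-order terms) that make Theorem \ref{th-2} an improvement. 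The same example refutes your guiding heuristic that ``a repeated real factor of $H_m$ is exactly the degeneracy that would produce a saddle-like hyperbolic sector'': there the origin of $b(\mathcal{X})$ is a center. Beyond this, two further points: the central technical step (the blow-up analysis yielding no hyperbolic sectors) is announced but never carried out, and it is the entire difficulty --- note that in the positive-definite case no blow-up is needed, since $\mathcal{X}$ then has no singular points at infinity and monodromy of the origin of $b(\mathcal{X})$ is immediate; and your exclusion of a focus ``because $b(\mathcal{X})$ inherits a first integral'' is not valid as stated, since it requires the first integral $1/(f^2+g^2)$ (read in the $(u,v)$ chart) to extend continuously to the origin of $b(\mathcal{X})$, which by Proposition \ref{pr-6} of the paper is \emph{equivalent} to the injectivity being proved; the rigorous route is the limit-set argument of Proposition \ref{pr-4}, as used in the step $(c)\Rightarrow(b)$ of Theorem \ref{th-3}.
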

Itikawa et al. \cite{Jackson} give two new classes of polynomial maps satisfying the real Jacobian conjecture in $\mathbb{R}^2$. A new proof of Pinchuk map which is a non-injective can be found in \cite{MR3866202}. In these works, their proofs rely only on the qualitative theory of planar differential systems, following ideas inspired by Theorem \ref{th-1}. We note that the essential of their proofs are  to characterize the global dynamical behavior of Hamiltonian polynomial vector field $\mathcal{X}$. As we know, the global dynamical analysis of Hamiltonian polynomial vector field in many cases are hard and tedious, because we need to get the local  behavior at the all finite and infinite singular points, and to determine their separatrix configurations.  For this reason, it is natural to ask whether there exist \emph{local dynamical conditions} to ensure that $F$ is a global injective in $\mathbb{R}^2$.  

Our first result of this paper provides necessary and sufficient conditions for the validity of the two-dimensional real Jacobian conjecture.
\begin{theorem}\label{th-3}
  Let $F=\left(f,g\right):\mathbb{R}^2\rightarrow \mathbb{R}^2$ be a polynomial map with nowhere zero Jacobian determinant such that $F\left(0,0\right)=\left(0,0\right)$. Denote by $b\left(\mathcal{X}\right)$ the Bendixson compactification of Hamiltonian vector field $\mathcal{X}$ defined in \eqref{eq-1}. Then the following statements are equivalent.
  \begin{enumerate}[(a)]
    \item $F$ is a global diffeomorphism of the plane onto itself.
    \item The origin of the polynomial vector field $b\left(\mathcal{X}\right)$ is a center.
    \item The origin of the polynomial vector field $b\left(\mathcal{X}\right)$ is a monodromic singular point.
    \item The origin of the polynomial vector field $b\left(\mathcal{X}\right)$ has no hyperbolic sectors.
    \item The polynomial vector field $b\left(\mathcal{X}\right)$ has a $C^k$ first integral with an isolated minimun at the origin, where $k\in\mathbb{N}^{+}\cup\{\infty\}.$
  \end{enumerate}
\end{theorem}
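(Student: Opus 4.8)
The plan is to prove the cycle of implications (a)$\,\Rightarrow\,$(b)$\,\Rightarrow\,$(c)$\,\Rightarrow\,$(d)$\,\Rightarrow\,$(a) together with the separate equivalence (b)$\,\Leftrightarrow\,$(e). The backbone of every step is that $\mathcal{X}$ is the Hamiltonian field of $H=\tfrac12(f^2+g^2)$, i.e. $\mathcal{X}=(-H_y,H_x)$, so its orbits are the level curves of $H$. Since $H\geq 0$ and a singular point $p$ of $\mathcal{X}$ satisfies $(f,g)(p)\,DF(p)=0$ with $\det DF\neq 0$, the singular points of $\mathcal{X}$ are exactly the points of $F^{-1}(0,0)$, and at each of them the Hessian of $H$ has determinant $(\det DF)^2>0$ and positive trace. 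Hence every finite singular point of $\mathcal{X}$ is a nondegenerate local minimum of $H$, that is, a center; in particular $\mathcal{X}$ has no finite saddles, nodes or foci, which is what makes the whole scheme run.

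For (a)$\,\Rightarrow\,$(b) I would invoke Theorem~\ref{th-1}: statement (a) forces the origin of $\mathcal{X}$ to be a global center, so $\mathbb{R}^2\setminus\{0\}$ is foliated by nested ovals growing without bound. Because $b(\mathcal{X})$ agrees with $\mathcal{X}$ up to the positive time-rescaling factor $(u^2+v^2)^d$ away from the origin and carries the point at infinity to the origin of the $(u,v)$-chart, these large ovals are sent to arbitrarily small ovals encircling the origin of $b(\mathcal{X})$; thus a punctured neighbourhood of that origin is filled with periodic orbits and (b) holds. The steps (b)$\,\Rightarrow\,$(c)$\,\Rightarrow\,$(d) are essentially definitional: a center is monodromic, and a monodromic singular point admits no hyperbolic sectors, since its orbits wind around it instead of approaching along separatrices.

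The substantive direction is (d)$\,\Rightarrow\,$(a), which I would argue by contradiction through Theorem~\ref{th-1}. If $F$ is not a global diffeomorphism, then the origin of $\mathcal{X}$ is a local but not global center, so its period annulus has a nonempty boundary consisting of separatrices and singular points. Since $b(\mathcal{X})$ carries the first integral $\widetilde{H}(u,v)=H\bigl(\tfrac{u}{u^2+v^2},\tfrac{v}{u^2+v^2}\bigr)$ on a punctured neighbourhood of its origin, that origin can be neither a node nor a focus. The crux is to show a bounded period annulus is incompatible with (d): because all finite singular points are minima of $H$, no finite saddle can lie on the boundary of the annulus, so that boundary must escape to infinity; read through the Bendixson chart, this escaping separatrix structure produces a characteristic orbit, and in fact a hyperbolic sector, at the origin of $b(\mathcal{X})$, contradicting (d). I expect this translation --- rigorously matching ``the period-annulus boundary runs off to infinity'' with ``a hyperbolic sector appears at the origin of $b(\mathcal{X})$'' --- to be the main obstacle, requiring the finite-sector decomposition of an analytic planar singular point together with the index identity $\text{index}=1+(e-h)/2$ to pin down the admissible configurations.

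Finally, for (b)$\,\Leftrightarrow\,$(e): if the origin of $b(\mathcal{X})$ is a center then $\widetilde{H}$ blows up along the periodic orbits shrinking to it, so a bounded reparametrization such as $G=1/(1+\widetilde{H})$ extends to a first integral of $b(\mathcal{X})$ with an isolated minimum at the origin; the technical content is to verify that $G$ can be chosen of class $C^k$ up to the origin, for which I would analyse the growth of $H$ at infinity and, if needed, appeal to smoothing results for first integrals near a center. Conversely, a $C^k$ first integral with an isolated minimum at the origin has Jordan-curve level sets shrinking to that point; being invariant and enclosing no other singular point, each such level curve is a single periodic orbit, so the origin is a center and (b) follows.
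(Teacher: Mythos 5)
Your skeleton (the cycle (a)$\Rightarrow$(b)$\Rightarrow$(c)$\Rightarrow$(d)$\Rightarrow$(a) plus (b)$\Leftrightarrow$(e)) and your supporting observations are sound: singular points of $\mathcal{X}$ are exactly $F^{-1}(0,0)$ and are nondegenerate minima of $H$, hence centers (this is the paper's Proposition \ref{pr-1}), and the implications (a)$\Rightarrow$(b)$\Rightarrow$(c)$\Rightarrow$(d) go through as you describe. The problem is that your proposal stops exactly where the real work lies. In (d)$\Rightarrow$(a), the step you yourself flag as ``the main obstacle'' --- that the escaping period-annulus boundary produces a hyperbolic sector at the origin of $b\left(\mathcal{X}\right)$ --- is asserted but never proved, and it is not a routine translation: an orbit tending to the origin of $b\left(\mathcal{X}\right)$ is a priori compatible with elliptic or parabolic sectors as well, so something must exclude those. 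The paper supplies two separate inputs for this: (i) Proposition \ref{pr-4} (via Poincar\'{e}--Bendixson, Lemma \ref{le-5}), showing every orbit of $b\left(\mathcal{X}\right)$ that tends to its origin does so in \emph{both} time directions, which kills parabolic sectors (Proposition \ref{pr-5}); and (ii) an index computation (Theorem \ref{th-7} with Proposition \ref{pr-2}), using that the origin of $\mathcal{X}$ is a center --- so $b\left(\mathcal{X}\right)$ has a periodic orbit enclosing all its finite singular points --- to force $e=0$ and to show the origin is the unique finite singular point. Only then is the origin monodromic, and passing from monodromic to center requires the \'{E}calle--Il'yashenko center-focus dichotomy combined again with Proposition \ref{pr-4}; your alternative claim that the punctured first integral $\widetilde{H}$ already rules out a focus is itself unproved (a first integral defined only on a punctured neighborhood does not obviously preclude spiraling). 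None of this machinery is optional, and your sketch does not carry any of it out.

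The second gap is in (b)$\Rightarrow$(e). Your construction $G=1/(1+\widetilde{H})$ needs (i) $\widetilde{H}\rightarrow\infty$ at the origin of $b\left(\mathcal{X}\right)$ when that origin is a center --- this is essentially the necessity half of Proposition \ref{pr-6} (equivalently Theorem \ref{th-14}), which the paper proves separately using transverse sections and the contact-point Lemma \ref{le-2}, so it cannot be taken for granted inside the proof of Theorem \ref{th-3} --- and (ii) $C^k$ regularity of $G$ at the origin, which you defer to unspecified ``smoothing results''; note that $G$ is a quotient of polynomials both vanishing at the origin, so continuity there gives no differentiability for free. The paper bypasses both issues in one line by applying Mazzi--Sabatini (Theorem \ref{th-15}) directly to $b\left(\mathcal{X}\right)$: for a $C^k$ system, a singular point is a center if and only if there is a $C^k$ first integral with an isolated minimum there. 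In short, your proposal is a correct roadmap with correct intuition, but both of its substantive legs --- the sector/index analysis behind (d)$\Rightarrow$(a) and the integrability equivalence (b)$\Leftrightarrow$(e) --- remain unproved gaps rather than proofs.
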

\begin{remark}\label{re-2}
  \emph{For the condition $(a)$ of Theorem \ref{th-1}, the origin of $\mathcal{X}$ must be a global center. For the condition $(b)$ of Theorem \ref{th-3}, the origin of $b\left(\mathcal{X}\right)$ is a local center, not necessary global center. The above conditions $(c)$ and $(d)$ are also local. The last condition $(e)$ is from the point of view of integrability. The vector field $b\left(\mathcal{X}\right)$ always exists a first integral (see Section \ref{se-6}), but such a first integral in general cannot be extended to the origin of $b\left(\mathcal{X}\right)$. Theorem \ref{th-3} implies that the local dynamical behavior of $b\left(\mathcal{X}\right)$ at the origin determines fully whether polynomial map $F$ is a global injective. So it allows us to investigate real Jacobian conjecture by some elementary dynamical tools which is the local analysis of singular points, for example blow up technique. In fact, the origin of vector field $b\left(\mathcal{X}\right)$ is \emph{degenerate} (see proof of Theorem \ref{th-3}), that is, its linear part identically zero. The two classical problems for degenerate singular point are respectively \emph{monodromy problem} to decide whether it is of focus-center type, and \emph{stability problem} to distinguish  between a center and a focus. For degenerate singular point, these two problems are very complicated  in general vector field, see \cite{MR1758996,MR1912074,MR2558171}. It is worth to notice that the monodromy and stability problems of vector field $b\left(\mathcal{X}\right)$ at the origin are completely solved under the assumption of Theorem \ref{th-3}.}
\end{remark}

Using Theorem \ref{th-3}, we present a necessary and sufficient condition such that the two-dimensional real Jacobian conjecture holds, which is an \emph{algebraic criterion}. Our second result is described as follows.
\begin{theorem}\label{th-14}
 Let $F=\left(f,g\right):\mathbb{R}^2\rightarrow \mathbb{R}^2$ be a polynomial map with nowhere zero Jacobian determinant such that $F\left(0,0\right)=\left(0,0\right)$. Define a polynomial criterion function as follows:
\begin{align}\label{eq-32}
&I\left(x,y\right)=f^2\left(x,y\right)+g^2\left(x,y\right).
\end{align}
Then $F$ is a global injective if and only if
\begin{align}\label{eq-33}
&\lim_{\left|x\right|+\left|y\right|\rightarrow+\infty}I(x,y)=+\infty.
\end{align}
\end{theorem}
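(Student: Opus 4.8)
The plan is to prove Theorem \ref{th-14} by reducing it to the equivalence of conditions $(a)$ and $(b)$ in Theorem \ref{th-3}, thereby translating the analytic growth condition \eqref{eq-33} into the statement that the origin of $b(\mathcal{X})$ is a center. The key observation is that the criterion function $I(x,y)=f^2+g^2$ is, up to the factor $\tfrac12$, a first integral of the Hamiltonian vector field $\mathcal{X}=(-ff_y-gg_y,\,ff_x+gg_x)$: indeed $\tfrac12\dot I = (ff_x+gg_x)\dot x + (ff_y+gg_y)\dot y$, and substituting $\dot x = -ff_y-gg_y$, $\dot y = ff_x+gg_x$ makes the two cross terms cancel, so $I$ is constant along orbits of $\mathcal{X}$. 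Moreover $I\ge 0$, and since $F(0,0)=(0,0)$ we have $I(0,0)=0$, so the origin is an absolute minimum of $I$. Because $\det DF$ is nowhere zero, $\nabla I = 0$ forces $ff_x+gg_x = ff_y+gg_y = 0$, which (using $\det DF\neq0$) holds only where $f=g=0$; thus the origin is the unique singular point of $\mathcal{X}$ in $\mathbb{R}^2$ and the only critical point of $I$.

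First I would establish the forward direction: assume $F$ is a global injective (equivalently, by Theorem \ref{th-3}, a global diffeomorphism and the origin of $b(\mathcal{X})$ is a center). Since $F$ is a global diffeomorphism with $F(0,0)=(0,0)$, it is a proper map, so $\|F(x,y)\|\to\infty$ as $(x,y)\to\infty$; equivalently $I(x,y)=\|F(x,y)\|^2\to+\infty$ as $|x|+|y|\to+\infty$, which is exactly \eqref{eq-33}. This direction is essentially the observation that a homeomorphism of $\mathbb{R}^2$ onto itself must be proper.

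For the converse, I would assume \eqref{eq-33} and show $F$ is globally injective by verifying condition $(b)$ of Theorem \ref{th-3}, namely that the origin of $b(\mathcal{X})$ is a center. The growth condition \eqref{eq-33} says that the level sets $\{I = c\}$ are compact for every $c>0$; combined with the facts that $I$ has a unique critical point (the origin, an isolated minimum) and that $I$ is a first integral of $\mathcal{X}$, this forces each level set $\{I=c\}$ to be a single compact regular curve encircling the origin, hence a periodic orbit of $\mathcal{X}$. Therefore a full punctured neighborhood of the origin is filled with periodic orbits, so the origin is a center of $\mathcal{X}$, and one checks this persists under the Bendixson compactification so that the origin of $b(\mathcal{X})$ is a center. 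Invoking the equivalence $(a)\Leftrightarrow(b)$ of Theorem \ref{th-3} then yields that $F$ is a global diffeomorphism, in particular global injective.

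The main obstacle will be the converse direction, specifically arguing rigorously that compactness of the level sets $\{I=c\}$ together with regularity of $I$ away from the origin forces each such level set to be a \emph{single} closed orbit rather than a union of several disjoint ovals or a more complicated configuration. The delicate point is to rule out the level sets splitting into several components as $c$ varies and to ensure the family of ovals shrinks continuously down to the origin as $c\to 0^+$; here I would use that $I$ has no critical value other than $0$ (so the topology of $\{I\le c\}$ cannot change for $c>0$) and that $\{I\le c\}$ is a compact, connected neighborhood of the unique minimum, which Morse-theoretic/regular-value reasoning identifies as a disk bounded by the oval $\{I=c\}$. Translating this planar picture faithfully to $b(\mathcal{X})$, and confirming that the center condition at the origin of $b(\mathcal{X})$ is the correct notion matching Theorem \ref{th-3}, is the step requiring the most care.
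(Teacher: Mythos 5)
Your forward direction is fine (a global homeomorphism of $\mathbb{R}^2$ onto itself is proper, hence $I\to\infty$), and your converse strategy --- reading \eqref{eq-33} as compactness of the level sets of the first integral $I$ and concluding that these level sets are closed orbits --- is a legitimate route, genuinely different from the paper's proof (which works with the reciprocal first integral $\bar I$ on the Bendixson compactification, extends it continuously by $0$ to the origin of $b(\mathcal{X})$, and deduces monodromy from the absence of orbits tending to that point). However, there is a genuine gap at the step where you claim that ``the origin is the unique singular point of $\mathcal{X}$ in $\mathbb{R}^2$ and the only critical point of $I$.'' What follows from $\det DF\neq 0$ is only that $\nabla I(q)=0$ if and only if $F(q)=(0,0)$; it does \emph{not} follow that $(0,0)$ is the only zero of $F$. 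A nonvanishing Jacobian makes the zeros of $F$ isolated, but a priori there may be several of them (this happens for a suitable translate of the Pinchuk map: $f=g=0$ at more than one point while $\det DF$ is nowhere zero). Proving that $F^{-1}(0,0)$ reduces to a single point under hypothesis \eqref{eq-33} is a substantive part of the theorem, not a consequence of local invertibility; in the paper the analogous difficulty is resolved in the proof of Theorem \ref{th-3} by the Poincar\'e index formula (the computation $e/2+1+\bar c=1$). Since your whole Morse-theoretic picture (connected sublevel sets, each level set a single oval encircling the origin, ovals shrinking to the origin as $c\to0^+$) presupposes this uniqueness, the converse as written is incomplete.

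The gap is fillable along lines you almost state yourself: every critical point of $I$ lies in $I^{-1}(0)$, so $0$ is the only critical value of $I$; properness of $I$ (which is exactly \eqref{eq-33}) then shows, via the standard retraction between sublevel sets in the absence of critical values, that all sublevel sets $\left\{I\le c\right\}$ with $c>0$ have the same number of connected components as $\mathbb{R}^2=\bigcup_{c>0}\left\{I\le c\right\}$, namely one; hence $F^{-1}(0,0)=\bigcap_{c>0}\left\{I\le c\right\}$ is compact, connected and discrete, i.e.\ a single point. Only after this step does your oval argument become sound. Two smaller remarks: (i) once every level set $\left\{I=c\right\}$, $c>0$, is known to be a periodic orbit surrounding the origin, the origin is a \emph{global} center of $\mathcal{X}$, so you can invoke Theorem \ref{th-1} directly and skip $b(\mathcal{X})$ altogether; (ii) your phrase that the center at the origin of $\mathcal{X}$ ``persists under the Bendixson compactification so that the origin of $b(\mathcal{X})$ is a center'' conflates two different points --- the origin of $b(\mathcal{X})$ is the image of infinity, not of the origin of $\mathcal{X}$, and a merely local center of $\mathcal{X}$ at $(0,0)$ would say nothing about it; it is only the global center structure (closed orbits near infinity) that transfers to a center at the origin of $b(\mathcal{X})$.
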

The following example \cite{MR1362759} shows that Theorem \ref{th-14} improves Theorem \ref{th-2}.
\begin{example}\label{ex-1}
  \emph{Consider the  polynomial map $F=(f,g):\mathbb{R}^2\rightarrow\mathbb{R}^2$ with $f=y+y^3$ and $g=x+xy^2$. Here $\det DF=-\left(1+y^2\right)\left(1+3y^2\right)$. The higher homogeneous terms of
\begin{align*}
&ff_x+gg_x=x+2xy^2+xy^4,\quad ff_y+gg_y=y+2y\left(x^2+2y^2\right)+y^3\left(2x^2+3y^2\right)
\end{align*}
have $y$ as a common factor. This map $F$ does not satisfy the condition of Theorem \ref{th-2}.  The criterion function is given by
$$I(x,y)=\left(x^2+y^2\right)\left(1+y^2\right)^2\geq x^2+y^2.$$
Obviously, $\lim_{\left|x\right|+\left|y\right|\rightarrow+\infty}I(x,y)=+\infty$. So $F$ is a global injective.}
\end{example}

After completing the proof of Theorem \ref{th-14}, a natural idea appears for us, i.e. how to generalize it from $\mathbb{R}^2$ to $\mathbb{R}^n$. Along the method to prove Theorem \ref{th-14}, we attempt to generalize it. There are essential differences between the dynamic properties of two-dimensional  and high-dimensional. Our accidental meeting the tool from nonlinear functional analysis led to the following result.
\begin{theorem}\label{th-16}
 Let $F=(f^1,\ldots,f^n):\mathbb{R}^n\rightarrow \mathbb{R}^n$ be a polynomial map with nowhere zero Jacobian determinant such that $F\left(0,\ldots,0\right)=\left(0,\ldots,0\right)$.
Then $F$ is a global injective if and only if
\begin{align}\label{eq-47}
&\lim_{\parallel \mathbf{x}\parallel\rightarrow\infty}\parallel F\left(\mathbf{x}\right)\parallel=\infty.
\end{align}
Here, $\parallel\cdot\parallel$ defines a norm on $\mathbb{R}^n$.
\end{theorem}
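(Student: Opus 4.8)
The plan is to recognize condition \eqref{eq-47} as the \emph{properness} (equivalently, \emph{coercivity}) of $F$ and to combine a global inversion theorem of Hadamard--Plastock type with the algebraic fact that injective polynomial self-maps are surjective. Since $\det DF$ is continuous and nowhere zero on the connected set $\mathbb{R}^n$, it has constant sign, so by the inverse function theorem $F$ is everywhere a local diffeomorphism; in particular $F$ is an open map and $\Omega:=F(\mathbb{R}^n)$ is open. Recall also that for a continuous self-map of $\mathbb{R}^n$, condition \eqref{eq-47} is equivalent to $F$ being proper, i.e. $F^{-1}(K)$ is compact for every compact $K\subset\mathbb{R}^n$.

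For the implication \eqref{eq-47} $\Rightarrow$ injectivity I would argue as follows. A proper continuous map into the locally compact Hausdorff space $\mathbb{R}^n$ is closed, so $\Omega=F(\mathbb{R}^n)$ is simultaneously open and closed; as $\mathbb{R}^n$ is connected and $\Omega\neq\varnothing$, we obtain $\Omega=\mathbb{R}^n$, i.e. $F$ is surjective. A proper local diffeomorphism onto $\mathbb{R}^n$ is a covering map, and since $\mathbb{R}^n$ is simply connected this covering has a single sheet; hence $F$ is a homeomorphism and, in particular, globally injective. Equivalently, one may invoke the global inversion theorem of Plastock \cite{plastock1974homeomorphisms} directly: a local homeomorphism between Banach spaces is a global homeomorphism precisely when it is proper. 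Note that this direction uses no special feature of polynomials, only the nonvanishing Jacobian and the topology of $\mathbb{R}^n$.

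For the converse, assume $F$ is globally injective. Being an injective local diffeomorphism, $F$ restricts to a diffeomorphism from $\mathbb{R}^n$ onto the open set $\Omega$. The crucial point is to upgrade injectivity to surjectivity, and here the polynomial nature of $F$ is essential: by the classical theorem of Bia{\l}ynicki-Birula and Rosenlicht, an injective polynomial self-map of $\mathbb{R}^n$ is necessarily surjective. Thus $\Omega=\mathbb{R}^n$ and $F$ is a bijective local diffeomorphism, hence a global diffeomorphism of $\mathbb{R}^n$. Its inverse $F^{-1}$ is then continuous, so $F^{-1}$ carries compact sets to compact sets; equivalently $F$ is proper, which is exactly \eqref{eq-47}.

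The main obstacle is precisely this surjectivity step in the converse. For merely smooth maps Theorem \ref{th-16} is \emph{false} --- a diffeomorphism of $\mathbb{R}^n$ onto an open ball is an injective local diffeomorphism that is bounded and hence not coercive --- so any correct proof must use polynomiality in an essential way, and it is natural to feed it in through the algebraic surjectivity result. A secondary technical point is to justify that result over $\mathbb{R}$ rather than over an algebraically closed field, for which one passes to the induced map on complex points or, alternatively, replaces the algebraic input by a Brouwer-degree computation: orienting $\mathbb{R}^n$ so that $\det DF>0$, injectivity forces the local degree of $F$ over each value of $\Omega$ to equal $+1$, and one must rule out the degree dropping to $0$, which is what the presence of a boundary point of $\Omega$ would entail. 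Such a degree argument again needs the polynomial growth of $F$ to control the behaviour at infinity, confirming that the analytic and algebraic inputs cannot be fully separated.
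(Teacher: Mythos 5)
Your proposal is correct, and its backbone is the same as the paper's: the paper proves Theorem \ref{th-16} by quoting Lemma \ref{le-4} (the Hadamard--Plastock global inversion theorem, cited from Deimling and Ruzhansky--Sugimoto), which says that a $C^1$ local diffeomorphism of $\mathbb{R}^n$ is a homeomorphism \emph{onto} $\mathbb{R}^n$ if and only if $\parallel F\left(\mathbf{x}\right)\parallel\rightarrow\infty$ as $\parallel\mathbf{x}\parallel\rightarrow\infty$. Your first direction (coercivity implies injectivity, via properness, closedness of the image, and the covering-map argument over the simply connected $\mathbb{R}^n$) is exactly a proof of that lemma, or, as you note, can be replaced by citing \cite{plastock1974homeomorphisms} directly. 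Where you genuinely go beyond the paper is the converse. Lemma \ref{le-4} characterizes being a homeomorphism onto all of $\mathbb{R}^n$, while the theorem asserts an equivalence with bare injectivity, so one must bridge the gap ``injective $\Rightarrow$ surjective'' for polynomial local diffeomorphisms; the paper declares the theorem to hold ``immediately'' from the lemma and silently absorbs this step. You make it explicit through the Bia{\l}ynicki-Birula--Rosenlicht theorem (an injective polynomial self-map of $\mathbb{R}^n$ is surjective), after which $F$ is a global diffeomorphism, its inverse is continuous, and properness (hence \eqref{eq-47}) follows. You also correctly pinpoint why polynomiality cannot be dispensed with --- a diffeomorphism of $\mathbb{R}^n$ onto an open ball is an injective, non-coercive local diffeomorphism --- which is precisely the point the paper's one-line proof leaves unaddressed. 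In short: same strategy, but your write-up identifies and fills the one nontrivial step that the paper's proof omits.
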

\begin{remark}\label{re-1}
  \emph{Although Theorem \ref{th-14} is a consequence of Theorem \ref{th-16}, we must once again emphasize that our approach to prove Theorem \ref{th-14} is based on qualitative theory of dynamical systems, and is completely different from the proof of Theorem \ref{th-16}. We provide a very elementary dynamical proof.}
\end{remark}

The polynomial $R\left(x_1,\ldots,x_n\right)$ is \emph{quasi-homogeneous of weighted degree $l$ with respect to weight exponents $\mathbf{s}=\left(s_1,\ldots,s_n\right)$} if there exist positive integers $s_1,\ldots,s_n$ and $l$ such that for arbitrary $\lambda\in\mathbb{R}^+=\left\{\lambda\in \mathbb{R}, \lambda>0\right\}$, $R\left(\lambda^{s_1}x_1,\ldots,\lambda^{s_n}x_n\right)=\lambda^lR\left(x_1,\ldots,x_n\right)$. To each polynomial $R\left(x_1,\ldots,x_n\right)$, it can be written as the sum of its quasi-homogeneous parts $R=\sum_{i=0}^dR_i$, where  $R_i$ is quasi-homogeneous polynomial of weighted degree $i$ with respect to weight exponents $\mathbf{s}$. Moreover, the quasi-homogeneous term $R_d$ is called the higher quasi-homogeneous term of polynomial $R$ with respect to weight exponents $\mathbf{s}$,  and denote by $R_{\mathbf{s}}$. For a polynomial map $F=(f^1,\ldots,f^n): \mathbb{R}^n\rightarrow\mathbb{R}^n$, we denote $F_{\mathbf{s}}=\left(f_{\mathbf{s}}^1,\ldots,f_{\mathbf{s}}^n\right)$.

With the help of the algebraic skills, Cima et al. in \cite{MR1362759} proved the following theorem.
\begin{theorem}\label{th-13}
 Let $F=(f^1,\ldots,f^n): \mathbb{R}^n\rightarrow\mathbb{R}^n$ be a polynomial map with nowhere zero Jacobian determinant such that $F\left(0,\ldots,0\right)=\left(0,\ldots,0\right)$. If there is a weight  exponents $\mathbf{s}=\left(s_1,\ldots,s_n\right)\in\mathbb{N}_+^n$ such that $F_{\mathbf{s}}(\mathbf{x})=\mathbf{0}$ has only the trivial solution $\mathbf{x}=\mathbf{0}$, then $F$ is a global injective.
\end{theorem}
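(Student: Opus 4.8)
The plan is to deduce Theorem \ref{th-13} directly from Theorem \ref{th-16}. Since $F$ has nowhere zero Jacobian determinant and $F(\mathbf{0})=\mathbf{0}$, Theorem \ref{th-16} tells us that $F$ is a global injective as soon as $\|F(\mathbf{x})\|\to\infty$ whenever $\|\mathbf{x}\|\to\infty$. Thus the entire task reduces to proving that the hypothesis ``$F_{\mathbf{s}}(\mathbf{x})=\mathbf{0}$ has only the trivial solution'' forces this properness of $F$ at infinity.

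First I would install quasi-homogeneous polar coordinates adapted to the weight exponents $\mathbf{s}=(s_1,\ldots,s_n)$. Write the weighted dilation $\lambda\cdot\mathbf{x}=(\lambda^{s_1}x_1,\ldots,\lambda^{s_n}x_n)$ and introduce the quasi-homogeneous norm $\rho(\mathbf{x})$, defined as the unique $\lambda>0$ solving $\sum_i x_i^2/\lambda^{2s_i}=1$. A short check shows $\rho$ is continuous, proper and quasi-homogeneous of weighted degree one, so $\rho(\mathbf{x})\to\infty$ exactly when $\|\mathbf{x}\|\to\infty$, and the weighted sphere $S=\{\rho=1\}$ is compact. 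Every nonzero $\mathbf{x}$ then factors as $\mathbf{x}=\lambda\cdot\boldsymbol{\theta}$ with $\lambda=\rho(\mathbf{x})$ and $\boldsymbol{\theta}\in S$. Decomposing each coordinate into its quasi-homogeneous parts, $f^i=\sum_{j\le d_i}f^i_j$ with top weighted degree $d_i$ and $f^i_{\mathbf{s}}=f^i_{d_i}$, quasi-homogeneity gives $f^i(\lambda\cdot\boldsymbol{\theta})=\sum_{j\le d_i}\lambda^{j}f^i_j(\boldsymbol{\theta})$. Since $F(\mathbf{0})=\mathbf{0}$ kills every weighted-degree-zero (constant) part, any nonvanishing leading term necessarily has $d_i\ge 1$.

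The core estimate I would run by contradiction. Suppose $\|F(\mathbf{x})\|$ stayed bounded along some sequence $\mathbf{x}_k$ with $\|\mathbf{x}_k\|\to\infty$; writing $\mathbf{x}_k=\lambda_k\cdot\boldsymbol{\theta}_k$ we have $\lambda_k\to\infty$, and by compactness of $S$ we may pass to a subsequence with $\boldsymbol{\theta}_k\to\boldsymbol{\theta}_*\in S$. The hypothesis that $F_{\mathbf{s}}$ vanishes only at the origin means $F_{\mathbf{s}}$ is nonzero on $S$, so some index $i_0$ satisfies $f^{i_0}_{d_{i_0}}(\boldsymbol{\theta}_*)\ne 0$, and by continuity $|f^{i_0}_{d_{i_0}}(\boldsymbol{\theta}_k)|\ge c_0>0$ for all large $k$. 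Because $\boldsymbol{\theta}_k$ lives in the compact set $S$, every lower-degree coefficient $f^{i_0}_j(\boldsymbol{\theta}_k)$ is uniformly bounded by some $M$, whence $|f^{i_0}(\mathbf{x}_k)|\ge c_0\lambda_k^{d_{i_0}}-M\sum_{j<d_{i_0}}\lambda_k^{j}\to\infty$ since $d_{i_0}\ge 1$ is the strictly dominant power of $\lambda_k$. This forces $\|F(\mathbf{x}_k)\|\to\infty$, contradicting boundedness and establishing the properness, so Theorem \ref{th-16} yields global injectivity.

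I expect the main obstacle to be uniformity across directions: along a single ray the leading term obviously dominates, but a priori the subleading terms could conspire to keep $\|F\|$ small along a sequence whose direction drifts, especially as the top weighted degrees $d_i$ may differ from coordinate to coordinate. The compactness of the weighted sphere $S$ is exactly what neutralizes this difficulty, since it simultaneously produces a limiting direction $\boldsymbol{\theta}_*$ at which $F_{\mathbf{s}}$ is certifiably nonzero and supplies uniform bounds on all lower-order coefficients, thereby upgrading the pointwise degree dominance to the uniform growth required by Theorem \ref{th-16}.
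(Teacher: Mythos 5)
Your proposal is correct, and its skeleton coincides with the paper's: reduce to the properness criterion of Theorem \ref{th-16}, decompose each component into quasi-homogeneous parts, and rescale along weighted dilations so that everything is controlled by data on a compact sphere. Indeed, your weighted sphere $S=\{\rho=1\}$ is exactly the ordinary unit sphere $\mathbb{S}^{n-1}$, your factorization $\mathbf{x}=\lambda\cdot\boldsymbol{\theta}$ is the paper's substitution $x_j=y_j/r^{s_j}$ with $\lambda=1/r$, and your claim that $\rho(\mathbf{x})\to\infty$ exactly when $\parallel\mathbf{x}\parallel\to\infty$ is the paper's Lemma \ref{le-6}. Where you genuinely diverge is in how compactness of the sphere is exploited. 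The paper computes $\lim_{r\to0}1/\parallel F\parallel$ directly: it asserts the componentwise lower bound \eqref{eq-14} uniformly in $\mathbf{y}\in\mathbb{S}^{n-1}$ for small $r$, sums over $i$, and finishes with the positive minimum $L$ of $\parallel F_{\mathbf{s}}\parallel$ on the sphere. You instead argue by contradiction, extract a convergent subsequence of directions $\boldsymbol{\theta}_k\to\boldsymbol{\theta}_*$, and let a single component $f^{i_0}$, chosen so that its leading part is nonzero at $\boldsymbol{\theta}_*$, blow up, using compactness only to bound the lower-order coefficients. Each version buys something. The paper's is more quantitative: it yields an explicit growth rate, roughly $\parallel F\parallel\geq L\,r^{-2m_n}/2$ near infinity. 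Yours is softer but more robust: the strict inequality \eqref{eq-14}, as literally stated for all $\mathbf{y}\in\mathbb{S}^{n-1}$, fails at directions where an individual leading term $f_{m_i}^i(\mathbf{y})$ vanishes (only the full vector $F_{\mathbf{s}}$ is nonvanishing on the sphere; take for instance a component whose sole quasi-homogeneous part has a zero on $\mathbb{S}^{n-1}$, where \eqref{eq-14} reads $0<0$), and repairing the paper's chain of estimates requires passing to a vector-level inequality; your subsequence argument needs only continuity of the one selected leading component near the limiting direction, so it never meets this difficulty. One detail to make explicit in a polished write-up: since all norms on $\mathbb{R}^n$ are equivalent (a point the paper also invokes), $\left|f^{i_0}(\mathbf{x}_k)\right|\to\infty$ indeed forces $\parallel F(\mathbf{x}_k)\parallel\to\infty$ in whatever norm Theorem \ref{th-16} is applied with.
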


By Theorem \ref{th-16}, we give an alternate proof of Theorem \ref{th-13}. Our method to prove Theorem \ref{th-13} is completely different from \cite{MR1362759}.

Note that Theorem \ref{th-13}, Cima's result, is not a necessary and sufficient condition. In the above Example \ref{ex-1}, $F_{\mathbf{s}}$ for any  weight exponents $\mathbf{s}=\left(s_1,s_2\right)$ is always $\left(y^3,xy^2\right)$, which has nontrivial real solutions. This shows that Theorem \ref{th-16} improves Cima's result, i.e., Theorem \ref{th-13}.


The rest of this paper is organized as follows. In section \ref{se-2} we present some preliminary results. Section \ref{se-5} is devoted to prove Theorem \ref{th-3}. Theorem \ref{th-14} is proved in section \ref{se-6}. The proofs of Theorems \ref{th-16} and \ref{th-13} will be given in section \ref{se-4}.

\section{Preliminary results}\label{se-2}
In this section, we introduce some preliminary results.

%
%
%

\subsection{Limit sets}\label{sub-6}
Let $\varphi\left(t,p\right)$ be the integral curve of system \eqref{eq-3} passing through the point $p\in \mathbb{R}^2$ such that $\varphi\left(0,p\right)=p$. The set $K$ is \emph {positively invariant} if for each $p\in K$, $\varphi\left(t,p\right)\in K$ for all $t\geq0$.  We define the following sets
$$\omega\left(p\right)=\left\{x_0\in\mathbb{R}^2:\;\text{there exist}\;\left\{t_n\right\}\;\text{with}\;t_n\rightarrow \infty\;\text{and}\;\varphi\left(t_n,p\right)\rightarrow x_0\;\text{when}\;n\rightarrow\infty\right\}$$
and
$$\alpha\left(p\right)=\left\{x_0\in\mathbb{R}^2:\;\text{there exist}\;\left\{t_n\right\}\;\text{with}\;t_n\rightarrow -\infty\;\text{and}\;\varphi\left(t_n,p\right)\rightarrow x_0\;\text{when}\;n\rightarrow\infty\right\}.$$
The sets $\omega\left(p\right)$ and $\alpha\left(p\right)$ are called the $\omega$\emph{-limit set} and the $\alpha$\emph{-limit set} of $p$, respectively. Note that an $\alpha$-limit set of an integral curve $\varphi\left(t,p\right)$ is the $\omega$-limit set of the integral curve $\varphi\left(-t,p\right)$, i.e., after the time reversal. For this reason, it is sufficient to study the $\omega$-limit sets.

The following theorem characterized the structure of $\omega$-limit sets, see \cite{Dumortier2006qualitative} or \cite{MR2004534}.

\begin{theorem}[\emph{Poincar\'{e}-Bendixson Theorem}]\label{th-10}
Let $K$ be a positively invariant compact set for system \eqref{eq-3} containing a finite number of singular points, and $p\in K$. Then one of the following statements holds.
  \begin{enumerate}[(a)]
    \item $\omega\left(p\right)$ is a singular point.
    \item $\omega\left(p\right)$ is a periodic orbit.
    \item $\omega\left(p\right)$ consists of a finite number of singular points $p_1,\ldots,p_n$ and a finite number of orbits $\gamma_1,\ldots,\gamma_n$ such that $\alpha\left(\gamma_i\right)=p_i$, $\omega\left(\gamma_i\right)=p_{i+1}$ for $i=1,\ldots,n-1$, $\alpha\left(\gamma_n\right)=p_n$ and $\omega\left(\gamma_n\right)=p_1$. Possibly, some of the singular points $p_i$ are identified.
  \end{enumerate}
\end{theorem}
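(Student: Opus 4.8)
The plan is to prove the Poincaré–Bendixson Theorem through the classical machinery of transversal sections and the Jordan curve theorem, which is exactly what makes the planar setting rigid. First I would record the elementary structural facts about the limit set: since the forward orbit $\varphi(t,p)$, $t \geq 0$, is trapped in the compact positively invariant set $K$, the set $\omega(p)$ is nonempty, compact, invariant under the flow, and connected. Connectedness is the only one of these requiring a genuine argument; if $\omega(p)$ split into two disjoint compact pieces with disjoint open neighborhoods, the orbit would have to pass through the compact gap between them infinitely often while staying in $K$, producing a limit point in the gap and contradicting the splitting.

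Next I would set up the local tool. At any regular (non-singular) point $q$, the flow-box theorem provides a short arc $\Sigma$ transverse to $\mathscr{X}$, so that every orbit crosses $\Sigma$ in one fixed direction. The decisive topological input is the \emph{monotonicity lemma}: if a single orbit meets such a transversal at successive times $t_1 < t_2 < \cdots$, then the intersection points are monotone along $\Sigma$. This is proved by forming the Jordan curve consisting of the orbit arc between two consecutive crossings together with the subsegment of $\Sigma$ joining them, and observing that transversality confines the orbit to one side of this curve thereafter. An immediate corollary is that $\omega(p)$ meets any transversal in \emph{at most one} point.

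With these in hand I would run the case analysis. For any $q \in \omega(p)$, closedness and invariance of $\omega(p)$ force $\omega(q)$ and $\alpha(q)$ to lie in $\omega(p)$. If $\omega(p)$ contains a periodic orbit $\gamma$, I would show $\omega(p)=\gamma$: a transversal to $\gamma$ meets $\omega(p)$ only at its single point on $\gamma$, so by the flow box $\gamma$ is relatively open and closed in the connected set $\omega(p)$, giving alternative (b). If $\omega(p)$ consists only of singular points, connectedness together with the finiteness hypothesis forces it to reduce to a single point, giving (a). In the remaining case $\omega(p)$ contains a regular point $q$ but no periodic orbit; here I would argue that if $\omega(q)$ contained a regular point, the transversal-monotonicity argument would manufacture a periodic orbit inside $\omega(p)$, a contradiction, so $\omega(q)$ and likewise $\alpha(q)$ are single singular points. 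Thus every regular orbit in $\omega(p)$ runs from one singular point to another, and the finitely many singular points and connecting orbits assemble into the cyclic graphic described in (c).

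The main obstacle is the monotonicity lemma and its careful deployment in the mixed case (c): using Jordan separation together with the finiteness of singular points, one must verify that the connecting orbits actually close up into the configuration $\alpha(\gamma_i)=p_i$, $\omega(\gamma_i)=p_{i+1}$, rather than accumulating on infinitely many distinct orbits. The hypothesis that $K$ contains only finitely many singular points is precisely what rules out such pathological accumulation and guarantees that the transversal argument terminates in a finite cycle.
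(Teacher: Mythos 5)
You should know at the outset that the paper does not prove Theorem \ref{th-10} at all: it is the classical Poincar\'e--Bendixson theorem, quoted with citations to \cite{Dumortier2006qualitative} and \cite{MR2004534} and used later as a black box (e.g.\ in the proof of Lemma \ref{le-5}). So the only meaningful comparison is with the textbook proofs in those references, and your outline follows exactly that classical route: nonemptiness, compactness, invariance and connectedness of $\omega(p)$; flow-box transversals; the Jordan-curve monotonicity lemma; its corollary that $\omega(p)$ meets any transversal in at most one point; and the ensuing case analysis. Your treatment of cases (a) and (b), and your reduction in case (c) showing that every regular orbit $\gamma\subset\omega(p)$ has $\alpha(\gamma)$ and $\omega(\gamma)$ equal to single singular points, are correct and are exactly the standard arguments.

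The genuine gap is the last step of (c), precisely the one you defer: the claim that there are only \emph{finitely many} connecting orbits and that they close up into one cyclic graphic. You assert that finiteness of the singular points ``rules out such pathological accumulation,'' but the tools you have set up do not show this. The transversal corollary only forbids regular orbits of $\omega(p)$ from accumulating at a \emph{regular} point of $\omega(p)$; it does not forbid infinitely many homoclinic loops in $\omega(p)$ shrinking down to one of the singular points. Such a configuration needs no extra singular points: a homoclinic loop need not enclose any singular point in its interior (for $\dot z=z^2$, i.e.\ $\dot x=x^2-y^2$, $\dot y=2xy$, every circle tangent to the $x$-axis at the origin is a homoclinic loop enclosing no singular point), so a single degenerate singular point carrying infinitely many elliptic-type petals is not excluded by your hypotheses; moreover, no arc can be transverse to the field while crossing two petals separated by a hyperbolic sector, so the monotonicity corollary is not violated by such a configuration either. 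This is why careful statements for $C^1$ or $C^\infty$ systems (e.g.\ in Perko's book) conclude only \emph{countably} many connecting orbits in case (c); the finite, cyclically ordered version quoted in the paper is the one appropriate for analytic --- here polynomial --- vector fields, where finiteness of the sectorial decomposition (Theorem \ref{th-6}) or an equivalent analyticity input must be invoked, followed by a spiraling argument that extracts the cyclic order $\alpha(\gamma_i)=p_i$, $\omega(\gamma_i)=p_{i+1}$. Your outline contains neither ingredient, so as written it establishes the standard weak trichotomy but not statement (c) in the strong form quoted here.
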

\subsection{Local structure of isolated singular points}\label{sub-7}
To characterize the local structure of isolated singular point $q$ of  analytic vector field  $\mathscr{X}$, we need the following definitions, see \cite{Dumortier2006qualitative,MR1175631,MR3642375} for more details. Let $U$ be a local region limited by two orbits of $\mathscr{X}$ inside with vertex singular point $q$.

The local region $U$ is a \emph{hyperbolic sector} of singular point $q$ if all orbits resemble hyperbolae in $U$, see $(1)$ of Figure \ref{Fig-3}.

The local region $U$ is an \emph{elliptic sector} of singular point $q$ if  all orbits have the singular point $q$ as both $\alpha$ and $\omega$ limit sets, see $(2)$ of Figure \ref{Fig-3}.

The local region $U$ is a \emph{parabolic sector} of singular point $q$ if all orbits positively (or all negatively) flow into singular point $q$, see $(3)$ of Figure \ref{Fig-3}.

Note that the two boundaries of a hyperbolic sector are \emph{separatrices} of singular point $q$.

\begin{figure}[H]
\centering
\begin{minipage}{0.225\linewidth}
\centering
\psfrag{0}{$q$}
\centerline{\includegraphics[width=1\textwidth]{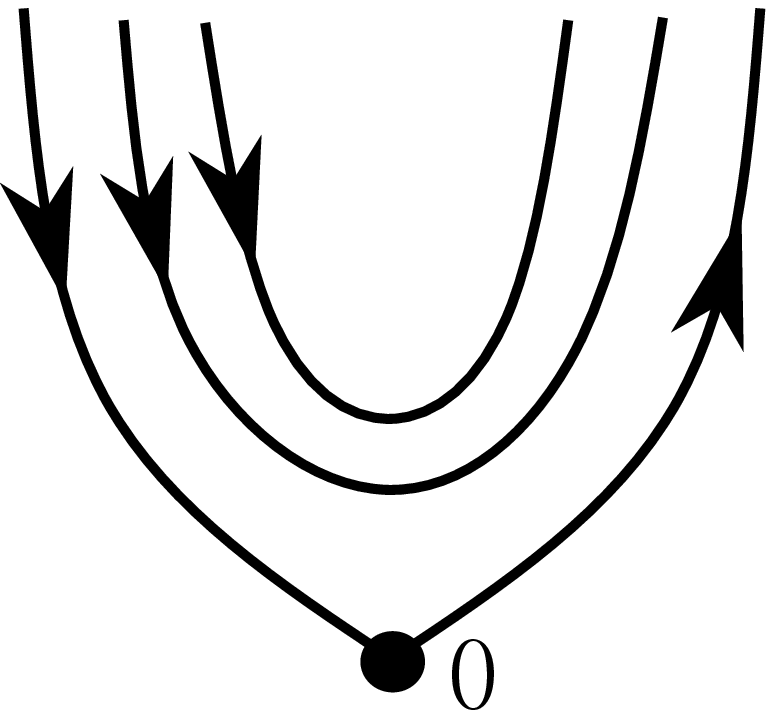}}
\centerline{(1)\;Hyperbolic sector}
\end{minipage}
\qquad
\begin{minipage}{0.225\linewidth}
\centering
\psfrag{0}{$q$}
\centerline{\includegraphics[width=1\textwidth]{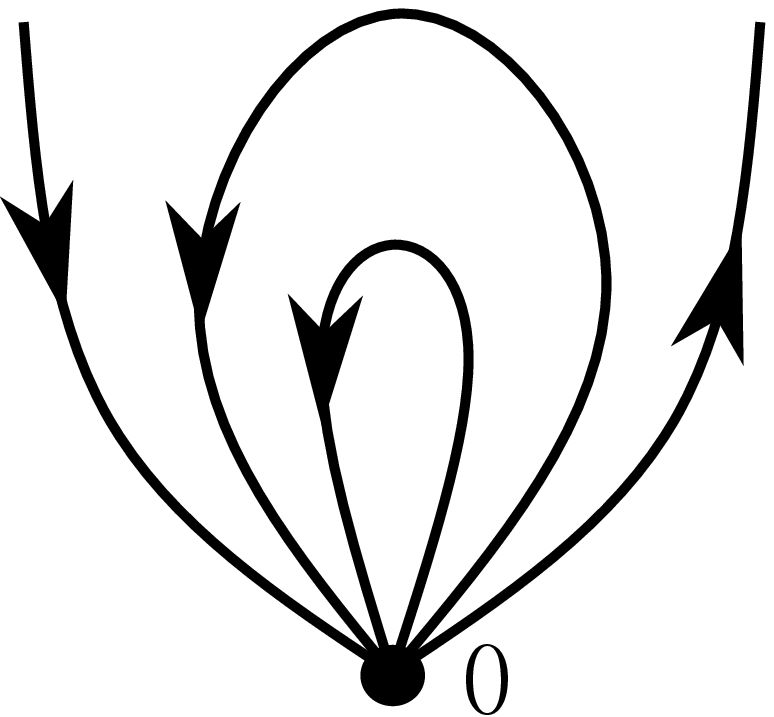}}
\centerline{(2)\;Elliptic sector}
\end{minipage}
\qquad
\begin{minipage}{0.225\linewidth}
\centering
\psfrag{0}{$q$}
\centerline{\includegraphics[width=1\textwidth]{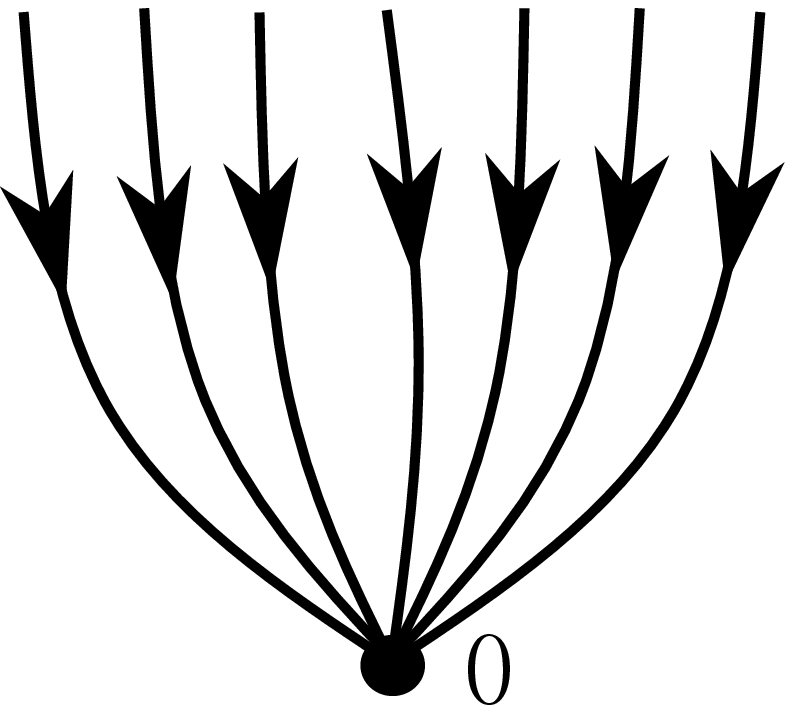}}
\centerline{(3)\;Parabolic sector}
\end{minipage}
\caption{Sectors near a singular point $q$.}\label{Fig-3}
\end{figure}

The next theorem is proved in \cite{MR0153903,MR1929104}.
\begin{theorem}\label{th-6}
If $q$ is an isolated singular point of analytic vector field $\mathscr{X}$, then singular point $q$ can only be a center, a focus, or be decomposed into a finite number of hyperbolic, elliptic and parabolic sectors.
\end{theorem}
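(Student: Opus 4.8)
The plan is to organize the proof around the \emph{reduction of singularities} (desingularization) of the analytic field near $q$, which is the tool that makes every subsequent step finite and combinatorial. After translating $q$ to the origin I would write $\mathscr{X}=(P,Q)$ with $P,Q$ analytic and vanishing at the origin; the substantive case is when the linear part $D\mathscr{X}(0)$ is degenerate, since a nondegenerate linear part already forces one of the listed local phase portraits. The key classical input, available precisely because $\mathscr{X}$ is analytic and the singularity is isolated, is that a finite composition of blow-ups produces a field $\widetilde{\mathscr{X}}$ for which the exceptional divisor $\mathcal{D}$ is invariant and every singular point of $\widetilde{\mathscr{X}}$ lying on $\mathcal{D}$ is elementary (hyperbolic or semi-hyperbolic); moreover there are only finitely many such points.

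With desingularization in hand the dichotomy of the statement emerges naturally. If $\widetilde{\mathscr{X}}$ has no singular point on $\mathcal{D}$, then the reduced flow simply circulates around the divisor, so after blowing down the orbits near $q$ turn around it and $q$ is monodromic. To decide between a center and a focus I would use that the Poincar\'e first return map on a transversal is analytic: either it is the identity map, whence $q$ is a center, or it differs from the identity and the orbits spiral, whence $q$ is a focus. This accounts for the first two alternatives of the theorem.

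In the remaining case $\widetilde{\mathscr{X}}$ has finitely many elementary singular points on $\mathcal{D}$. Each such point has a completely understood local phase portrait with only finitely many separatrices, and blowing back down converts these into finitely many orbits that tend to $q$ along definite \emph{characteristic directions}. Concretely, when the lowest-degree part of $\mathscr{X}$ already determines them, those directions are the roots $\theta^\ast\in[0,2\pi)$ of
\[
\cos\theta\,Q_m(\cos\theta,\sin\theta)-\sin\theta\,P_m(\cos\theta,\sin\theta)=0,
\]
where $P_m,Q_m$ are the lowest-degree homogeneous parts of $P,Q$; when this expression vanishes identically the successive blow-ups are needed instead, and the point is that they still terminate, which is exactly why finiteness survives. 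These finitely many separatrices cut a small punctured disk about $q$ into finitely many sectors.

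Finally I would classify each sector using the Poincar\'e--Bendixson theorem (Theorem \ref{th-10}). A closed region bounded by two consecutive separatrices contains no singular point other than $q$ and, by construction, no periodic orbit, so every orbit in it must have $q$ in its $\alpha$- or $\omega$-limit set; a short case analysis of how the two bounding separatrices are oriented in time then forces the region to be exactly one of the hyperbolic, elliptic, or parabolic sectors of Figure \ref{Fig-3}. The main obstacle is the desingularization step itself: proving that the blow-up process terminates after finitely many steps is the deep part of the argument, and it is here that analyticity and the isolatedness of $q$ are indispensable, since without them the process may fail to terminate and infinitely many sectors could in principle accumulate at $q$.
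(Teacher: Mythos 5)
First, a point of comparison: the paper does not prove this theorem at all — it is quoted from the classical literature \cite{MR0153903,MR1929104} — so your desingularization plan (in the spirit of \cite{Dumortier2006qualitative}) is a legitimate independent route, and its overall architecture is the modern one. However, it contains a genuine gap: the dichotomy you build everything on, namely ``no singular points on the exceptional divisor $\Rightarrow$ monodromic with analytic return map'' versus ``singular points on the divisor $\Rightarrow$ finitely many characteristic orbits, hence sectors,'' is not exhaustive. A monodromic singular point can perfectly well desingularize to a divisor that \emph{does} carry elementary singular points, namely corner saddles (or semi-hyperbolic points) all of whose separatrices are contained in the divisor itself; these blow down to the point $q$ and produce no orbits tending to $q$ whatsoever. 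The nilpotent center $\dot x = y$, $\dot y = -x^3$ (Hamiltonian, $H = y^2/2 + x^4/4$) is the standard example: its lowest-degree part is $(y,0)$, your characteristic equation reduces to $-\sin^2\theta = 0$, so the first homogeneous blow-up creates singular points at $\theta = 0,\pi$ on the divisor (non-elementary, and further blow-ups turn them into saddle corners), yet the origin is a center, so no orbit off the divisor tends to any of them. In this situation your second case wrongly manufactures separatrices reaching $q$ and sectors that do not exist.

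This is not a removable technicality, because in exactly this monodromic-with-corners case your center/focus argument also collapses. The return map on a transversal is then a composition of regular analytic transitions with Dulac corner maps, and such a composition is in general \emph{not} analytic at its fixed point (this is visible already for nilpotent foci, whose stability is governed by generalized Lyapunov quantities of fractional order); one cannot argue ``identity or isolated fixed points'' from analyticity. The assertion that a monodromic singular point of an analytic vector field is either a center or a focus — equivalently, that periodic orbits cannot accumulate on it unless it is a center — is precisely the finiteness theorem of \'Ecalle \cite{MR1399559} and Il'yashenko \cite{MR1133882}, which this paper itself must invoke in the step $(c)\Rightarrow(b)$ of the proof of Theorem \ref{th-3}. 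So the deep input is not only the termination of the blow-up process, as your closing paragraph asserts, but above all the quasi-analyticity needed for the center--focus dichotomy in the degenerate monodromic case. A corrected version of your plan would split your second case: if some separatrix or center manifold of a divisor singularity is not contained in the divisor, it descends to a characteristic orbit and your sector analysis via Theorem \ref{th-10} can proceed; if all of them lie in the divisor, the point is monodromic and you must quote \'Ecalle--Il'yashenko rather than analyticity of the return map.
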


\subsection{Compactification of vector field}\label{sub-1}
If $P\left(x,y\right),Q\left(x,y\right)$ are real polynomials in variables $x$ and $y$, then system \eqref{eq-3} is  polynomial system. We say that system \eqref{eq-3} has degree $d$ if $d=\max\{\text{deg}P,\text{deg}Q\}$. In order to investigate the behavior of the trajectories of a planar polynomial vector field $\mathscr{X}$ near infinity, we need to compactify it. Usually there are two methods to compactify a planar polynomial vector field: \emph{Pioncar\'{e} compactification} and \emph{Bendixson compactification}. Next we introduce these two compactifications, see Chapter $13$ of \cite{MR0350126} or Chapter $5$ of \cite{Dumortier2006qualitative} for more details.
%

We first briefly describe the Pioncar\'{e} compactification. The (Poincar\'{e}) unit sphere is tangent to $xy$-plane at the origin $(0,0)$ in  Figure \ref{Fig-6}.  The point $M(x,y)$ in the $xy$-plane connects with the center $O$ of the sphere through a straight line which intersects the sphere at the two points $M'$ and $\overline{M'}$. We project the point $M'$ on the lower hemisphere vertically in to the $xy$-plane which  leads to the point $M''$ on the \emph{Poincar\'{e} disk} $\mathbb{D}^2=\left\{(x,y)\big|x^2+y^2\leq1\right\}$. It is known that the boundary of the disc $\mathbb{D}^2$, i.e. the unit circle  $\mathbb{S}^1=\left\{(x,y)\big|x^2+y^2=1\right\}$, corresponds to the infinity of $\mathbb{R}^2$, and called the \emph{equator}. Then the vector field $\mathscr{X}$ can be extended analytically to  Poincar\'{e} sphere by the central projection. So the global dynamics of $\mathscr{X}$ can be characterized on the Poincar\'{e} disk, that is, the finite and  infinity  of $\mathscr{X}$ respectively corresponding the interior and boundary $\mathbb{S}^1$ of $\mathbb{D}^2$.

\begin{figure}[H]
\tiny
\centering
\begin{minipage}{0.5\linewidth}
\centering
\psfrag{0}{Equator $\mathbb{S}^1$}
\psfrag{1}{$x$}
\psfrag{2}{$y$}
\psfrag{3}{$M(x,y)$}
\psfrag{4}{$M'$}
\psfrag{5}{Poincar\'{e} disc $\mathbb{D}^2$}
\psfrag{6}{$O$}
\psfrag{7}{$\overline{M'}$}
\centerline{\includegraphics[width=1\textwidth]{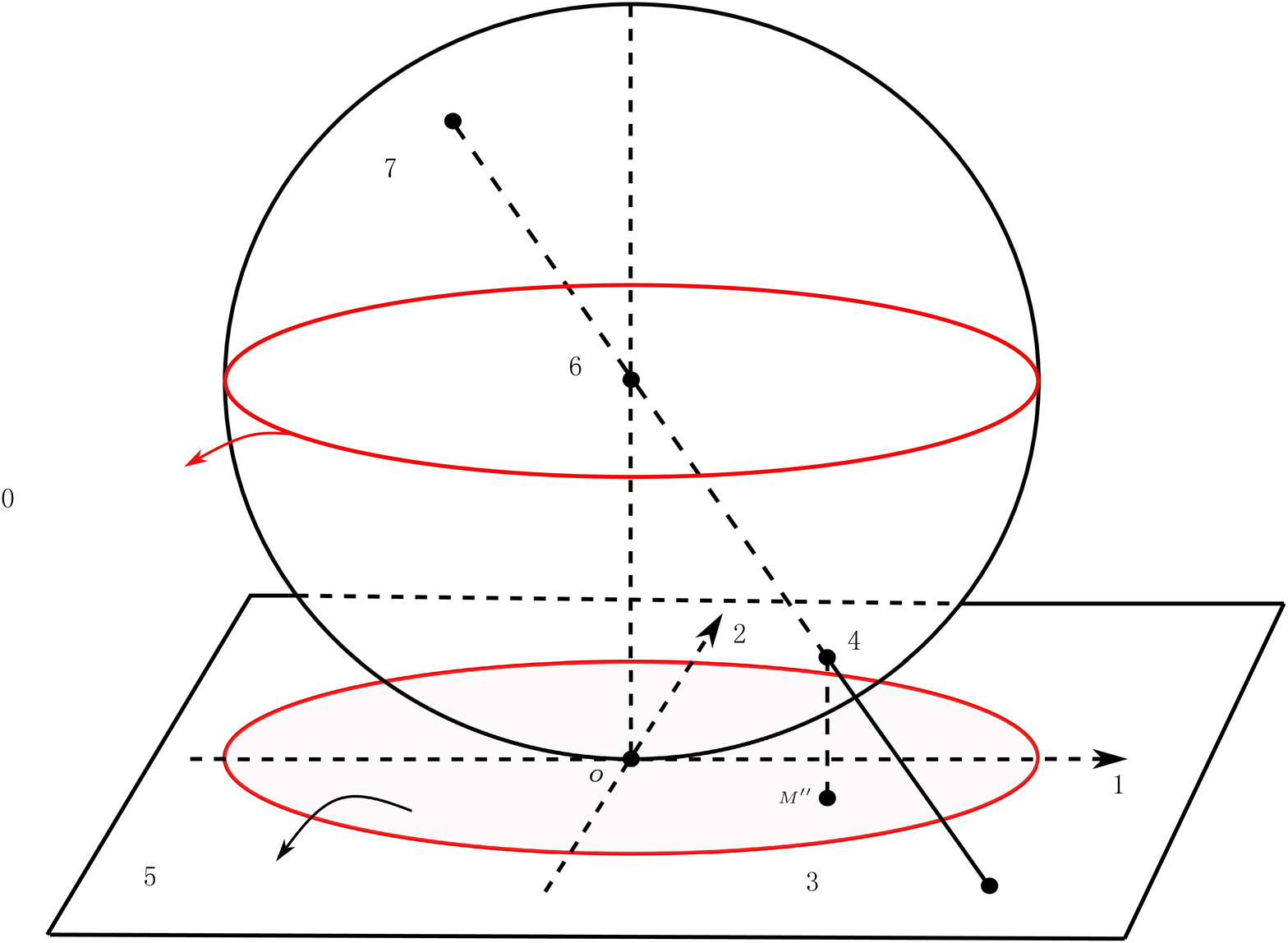}}
\end{minipage}
\caption{The Pioncar\'{e} compactification and Poincar\'{e} disc.}\label{Fig-6}
\end{figure}

Roughly speaking, the construction of the Bendixson compactification is as follows. Let $\mathbb{S}^2=\{Y=\left(y_1,y_2,y_3\right)\in\mathbb{R}^3:y_1^{2}+y_2^{2}+y_3^{2}=1/4\}$ (the \emph{Bendixson sphere}). Assume that $\mathscr{X}$ is defined with the tangent plane to the sphere $\mathbb{S}^2$ at the south pole $S=\left(0,0,-1/2\right)$, that is, $xy$-plane, see Figure \ref{Fig-1}. The \emph{Bendixson compactified vector field $b\left(\mathscr{X}\right)$ associated to $\mathscr{X}$} is an analytic vector field induced on $\mathbb{S}^2$ by the stereographic projection. More precisely, consider the stereographic projection $p_N$ from the north pole $N=\left(0,0,1/2\right)$ to the $xy$-plane. Thus the vector field $\mathscr{X}$ can be induced to $\mathbb{S}^2\setminus N$ by the map $p_N^{-1}$. Obviously, the infinity of the $xy$-plane is transformed by $p_N^{-1}$ into the north pole $N$.

To simplify the calculations, we take the two local charts on the Bendixson sphere $\mathbb{S}^2$ given by $$U_N=\mathbb{S}^2\setminus N,\quad U_S=\mathbb{S}^2\setminus S$$
with associated local maps
$$p_N:U_N\rightarrow\mathbb{R}^2,\quad p_S:U_S\rightarrow\mathbb{R}^2,$$
where $p_S$ is the stereographic projection of $\mathbb{S}^2$ from the south pole $S$ to the $uv$-plane  given by the equation  $y_3=1/2$. The map $p_S\circ p_N^{-1}$ from the $xy$-plane minus $S$ to the $uv$-plane minus $N$ is given by
\begin{align}\label{eq-4}
&u=\frac{x}{x^2+y^2},\quad v=\frac{y}{x^2+y^2}.
\end{align}
\begin{figure}[H]
\centering
\begin{minipage}{0.5\linewidth}
\centering
\psfrag{0}{$O$}
\psfrag{1}{$y_1$}
\psfrag{2}{$y_2$}
\psfrag{3}{$y_3$}
\psfrag{4}{$N$}
\psfrag{5}{$S$}
\psfrag{6}{$xy$-plane}
\psfrag{7}{$uv$-plane}
\centerline{\includegraphics[width=1\textwidth]{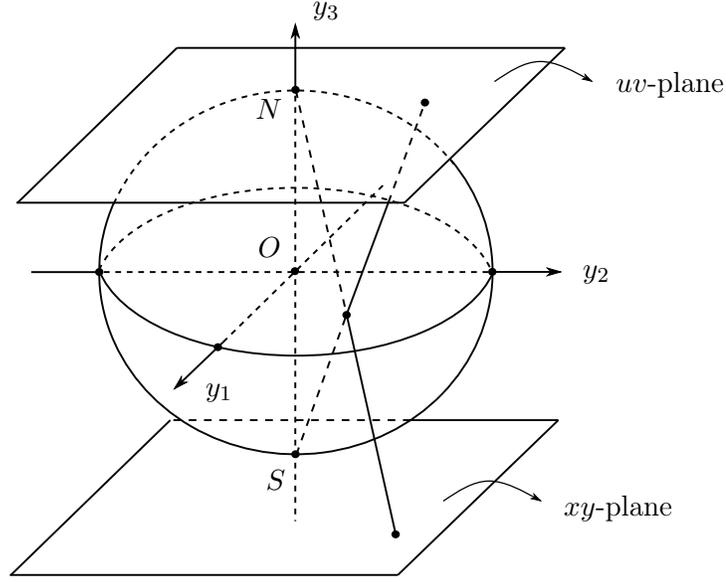}}
\end{minipage}
\caption{The stereographic projection and \emph{Bendixson sphere}.}\label{Fig-1}
\end{figure}
After a scaling of the independent variable in the local chart $\left(U_N, p_N\right)$ the expression for $b\left(\mathscr{X}\right)$ is
\begin{align}\label{eq-5}
\begin{cases}
\dot{u}=\left(u^2+v^2\right)^d\left[\left(v^2-u^2\right)P\left(\dfrac{u}{u^2+v^2},\dfrac{v}{u^2+v^2}\right)-2uvQ\left(\dfrac{u}{u^2+v^2},\dfrac{v}{u^2+v^2}\right)\right],\\
\specialrule{0em}{3pt}{3pt}
\dot{v}=\left(u^2+v^2\right)^d\left[\left(u^2-v^2\right)Q\left(\dfrac{u}{u^2+v^2},\dfrac{v}{u^2+v^2}\right)-2uvP\left(\dfrac{u}{u^2+v^2},\dfrac{v}{u^2+v^2}\right)\right].
\end{cases}
\end{align}
Hence the infinity of system \eqref{eq-3}  becomes the origin of  system \eqref{eq-5}.
\begin{remark}\label{re-3}
 By the Bendixson compactification, the infinity of system \eqref{eq-3} on the Poincar\'{e} disk, i.e the equator $\mathbb{S}^1$, is transformed into the origin of  system \eqref{eq-5}. We say in what follows, unless otherwise specified, that the infinity of a vector field is refer to the equator $\mathbb{S}^1$  on the Poincar\'{e} disk.
\end{remark}
\subsection{Topological index}\label{sub-3}
The following results are well known, see Chapter 6 of \cite{Dumortier2006qualitative} or \cite{MR1175631}.
\begin{theorem}[\emph{Poincar\'{e} Index Formula}]\label{th-7}
Let $q$  be an isolated singular point having the finite sectorial decomposition property. Let $e$, $h$ and $p$ denote the number of elliptic, hyperbolic and parabolic sectors of $q$, respectively. Then the index of $q$ is $\left(e-h\right)/2+1$.
\end{theorem}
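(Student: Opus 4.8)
The plan is to compute the index straight from its definition as the winding number of the vector field along a small circle about $q$, and then to redistribute that winding into local contributions coming from the individual sectors. Place $q$ at the origin and let $C_\varepsilon$ be the circle of radius $\varepsilon$, chosen small enough that $C_\varepsilon$ encloses no other singular point and that the finite sectorial decomposition of $q$ (which exists by Theorem \ref{th-6}) is realized on the punctured disc. Writing $\mathscr{X}=(P,Q)$ in polar form, let $\beta$ be the polar angle of the base point on $C_\varepsilon$ and $\theta=\arg(P,Q)$ the angle of the vector field, and set $\eta=\theta-\beta$, the angle of $\mathscr{X}$ relative to the outward radial direction. Since the index is $i(q)=\frac{1}{2\pi}\oint_{C_\varepsilon}d\theta$ and $\oint_{C_\varepsilon}d\beta=2\pi$, we obtain the basic identity
\begin{align*}
i(q)=1+\frac{1}{2\pi}\oint_{C_\varepsilon}d\eta.
\end{align*}
Everything then reduces to evaluating the total variation of the relative angle $\eta$ as one loops around $q$.

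Next I would break the loop integral into arcs. The separatrices bounding the sectors meet $C_\varepsilon$ in finitely many points $q_1,\dots,q_m$ (ordered counterclockwise), splitting $C_\varepsilon$ into arcs $A_1,\dots,A_m$, one per sector $\Sigma_k$, so that $\oint d\eta=\sum_k\int_{A_k}d\eta$. On each bounding orbit the field is tangent to the orbit; for a characteristic orbit approaching $q$ with a definite direction the field is asymptotically radial at the crossing point, so $\eta$ tends to $0$ (orbit leaving $q$) or to $\pi$ (orbit entering $q$) as $\varepsilon\to0$. The heart of the argument is the local claim that, tracking $\eta$ continuously across one arc,
\begin{align*}
\int_{A_k}d\eta=
\begin{cases}
-\pi,&\Sigma_k\ \text{hyperbolic},\\
0,&\Sigma_k\ \text{parabolic},\\
+\pi,&\Sigma_k\ \text{elliptic}.
\end{cases}
\end{align*}
Granting this, summation gives $\oint d\eta=\pi(e-h)$ and hence $i(q)=1+(e-h)/2$, as asserted. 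One checks the normalization against the node ($h=e=0$, $i=1$) and the saddle ($h=4$, $e=0$, $i=-1$), where the model field $(x,-y)$ gives $\eta=-2\beta$ and contribution $-\pi$ per sector.

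To establish the per-sector values I would argue from the qualitative picture of the orbits inside each sector, controlled by the Poincar\'{e}--Bendixson description (Theorem \ref{th-10}) together with the sector definitions illustrated in Figure \ref{Fig-3}. In a parabolic sector all orbits cross $A_k$ transversally in a single sense (all entering, or all leaving), so $\eta$ starts and ends at the same radial value and accumulates no net rotation, giving $0$. In a hyperbolic sector the two bounding separatrices are of opposite type, one entering and one leaving, so $\eta$ passes continuously from one radial value to the other; tracking the orientation of the sweep as in the model saddle shows the signed change is $-\pi$. In an elliptic sector every orbit has $q$ as both its $\alpha$- and $\omega$-limit, so it leaves and then returns; this return reverses the radial sense in the sense opposite to the hyperbolic case and contributes $+\pi$.

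I expect the main obstacle to be making the per-sector angular computation fully rigorous rather than heuristic. Two points require care: first, a bounding separatrix need not approach $q$ along a definite tangent (it may spiral), so the limiting values of $\eta$ and the continuity of its lift must be justified directly from the sectorial structure rather than from an assumed tangent direction; second, one must verify that $C_\varepsilon$ can be chosen so that each arc meets the interior orbits in the regular transversal pattern used above, uniformly as $\varepsilon\to0$. Both are handled by the finite sectorial decomposition property together with Theorem \ref{th-6}, which guarantee that, after shrinking $\varepsilon$, each sector presents exactly one of the three standard local phase portraits of Figure \ref{Fig-3}; the signed angular contribution then depends only on the sector type and not on the particular field, reducing the whole computation to the three model pictures.
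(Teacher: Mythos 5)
The paper never proves Theorem \ref{th-7}: it is quoted as a classical fact, with the reader sent to Chapter 6 of \cite{Dumortier2006qualitative} and to \cite{MR1175631}. So your proposal has to be measured against those classical proofs, and in outline it coincides with them: the identity $i(q)=1+\frac{1}{2\pi}\oint_{C_\varepsilon}d\eta$ for the angle $\eta$ of the field relative to the radial direction, the splitting of $C_\varepsilon$ into arcs by the separatrices, and the per-sector contributions $-\pi$, $0$, $+\pi$ for hyperbolic, parabolic and elliptic sectors are exactly Bendixson's argument; your bookkeeping (total $\pi(e-h)$, the node and saddle checks) is correct.

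The genuine gap is that the per-sector lemma, which is the entire content of the theorem, is asserted rather than proved, and the repair you propose at the end does not close it. For the parabolic case your confinement idea is essentially sound: if every interior orbit crosses the arc inward, the radial component of the field has a fixed sign there, so a continuous lift of $\eta$ is trapped in an interval of length $\pi$ and the net change is exactly $0$, not merely $0$ modulo $2\pi$. But this presupposes that orbits inside a parabolic sector meet the round circle $C_\varepsilon$ transversally, and neither Theorem \ref{th-6} nor the finite sectorial decomposition gives that: the distance to $q$ along an orbit need not be monotone, so tangencies with small circles can persist for every $\varepsilon$. For the hyperbolic and elliptic cases the situation is worse: matching the endpoint values of $\eta$ pins the change down only modulo $2\pi$, and ``tracking the orientation of the sweep as in the model saddle'' does not exclude $+\pi$ in place of $-\pi$, nor extra full turns. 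Your final claim --- that the contribution ``depends only on the sector type and not on the particular field,'' so that it suffices to compute the three model pictures of Figure \ref{Fig-3} --- is circular: sector type is a topological notion, while $\oint d\eta$ is computed from the actual directions of the field, and the invariance of this rotation under topological equivalence of sectors is essentially the statement being proved. A correct completion must pin the hyperbolic contribution at exactly $-\pi$ by an argument intrinsic to the sector, for instance by the careful direct estimates carried out in \cite{MR1175631}, or by trading the round circle for a closed curve adapted to the decomposition, built from orbit arcs and small transversal segments, along which the rotation of the field agrees with the rotation of the curve's own tangent except for controlled corner terms; it is from such an argument, not from an appeal to the model pictures, that the values $-\pi$, $0$, $+\pi$ legitimately emerge.
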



\begin{proposition}\label{pr-2}
If a vector field $\mathscr{X}$ has only isolated singular point, then the sum of indices singular points in a region $D$ enclosed by its any  periodic orbit is $1$.
\end{proposition}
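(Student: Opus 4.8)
The plan is to invoke the two fundamental properties of the Poincar\'{e} index of a closed curve and combine them. For a closed curve $\gamma$ that avoids all singular points of $\mathscr{X}=\left(P,Q\right)$, recall that the index $i\left(\gamma\right)$ is the winding number of the vector $\left(P,Q\right)$ as a point travels once around $\gamma$ in the positive (counterclockwise) sense, namely
\begin{align*}
i\left(\gamma\right)=\frac{1}{2\pi}\oint_{\gamma}\frac{P\,dQ-Q\,dP}{P^2+Q^2}.
\end{align*}
This quantity is an integer, and it is invariant under any continuous deformation of $\gamma$ that does not sweep across a singular point. I would take this deformation invariance as the basic tool, citing it from Chapter $6$ of \cite{Dumortier2006qualitative} or from \cite{MR1175631}.

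First I would establish the additivity of the index. Let $\gamma$ be a periodic orbit bounding the region $D$. Since $\mathscr{X}$ has only isolated singular points and $\overline{D}$ is compact, $D$ contains only finitely many singular points $q_1,\ldots,q_k$. Enclose each $q_j$ by a small circle $C_j\subset D$ chosen so small that $C_j$ surrounds no other singular point and the closed disks bounded by the $C_j$ are pairwise disjoint. Cutting the multiply connected region lying between $\gamma$ and the circles $C_j$ by auxiliary arcs and applying the deformation invariance (the contributions of the auxiliary arcs cancel in pairs, each being traversed twice in opposite directions) yields
\begin{align*}
i\left(\gamma\right)=\sum_{j=1}^{k}i\left(C_j\right)=\sum_{j=1}^{k}i\left(q_j\right),
\end{align*}
where $i\left(q_j\right)$ is by definition the index of the isolated singular point $q_j$.

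It then remains to show that $i\left(\gamma\right)=1$ for a periodic orbit. The crucial observation is that along $\gamma$ the vector field $\mathscr{X}$ is everywhere tangent to $\gamma$, so the direction of $\left(P,Q\right)$ coincides, up to a sign that records whether the flow runs counterclockwise or clockwise, with the unit tangent field of $\gamma$. By the theorem of turning tangents (Hopf's Umlaufsatz), as one traverses the simple closed curve $\gamma$ once counterclockwise the unit tangent makes exactly one full positive revolution; reversing the sign of the tangent shifts its argument by the constant $\pi$ and hence leaves the winding number unchanged. Therefore the winding number of $\mathscr{X}$ along $\gamma$ equals $+1$, that is $i\left(\gamma\right)=1$, and combining with the additivity above gives $\sum_{j=1}^{k}i\left(q_j\right)=1$, as asserted.

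The main obstacle is precisely the last step, the evaluation $i\left(\gamma\right)=1$: one must justify that the winding number of the tangent field of a simple closed curve is exactly $+1$ and is insensitive to the direction of the flow along the orbit. This is exactly the content of the Umlaufsatz, so rather than reprove it from scratch I would cite the statement from the references above and quote the resulting fact that \emph{the index of a periodic orbit equals one}. By contrast the additivity step is routine once the deformation invariance of the index is granted, since it only involves the cancellation of the auxiliary bridging arcs.
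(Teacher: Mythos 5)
Your proof is correct, and it is the standard argument: additivity of the index over the enclosed singular points via bridging arcs and deformation invariance, plus the fact that the index of a periodic orbit is $+1$ by the theorem of turning tangents (with the sign issue for clockwise flow correctly dismissed, since negating the field shifts every argument by the constant $\pi$). Note that the paper itself gives no proof of this proposition --- it is quoted as well known, citing Chapter 6 of \cite{Dumortier2006qualitative} and \cite{MR1175631} --- and your argument is essentially the one found in those references, including the correct observation that isolatedness of singular points plus compactness of $\overline{D}$ guarantees only finitely many singular points inside $\gamma$.
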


The next proposition follows immediately from Theorem \ref{th-7}.
\begin{proposition}\label{pr-3}
For an analytic vector field $\mathscr{X}$, the index of a monodromic singular point (i.e., a center or a focus) is $1$.
\end{proposition}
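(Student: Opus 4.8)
The plan is to deduce the result directly from the Poincar\'{e} Index Formula (Theorem \ref{th-7}), whose only input is the number of elliptic and hyperbolic sectors at the singular point. The entire content of the proof is therefore to show that a monodromic singular point carries neither hyperbolic nor elliptic sectors.

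First I would invoke the defining property of monodromy: there is a punctured neighbourhood of $q$ in which every orbit turns around $q$, either in forward or in backward time. I would then rule out the two forbidden sector types. In a hyperbolic sector the orbits enter along one boundary separatrix, pass close to $q$, and leave along the other boundary separatrix (as in $(1)$ of Figure \ref{Fig-3}); such orbits stay on one side and do not wind around $q$, contradicting monodromy. In an elliptic sector the orbits have $q$ as both their $\alpha$- and $\omega$-limit set (as in $(2)$ of Figure \ref{Fig-3}), so they form loops issuing from and returning to $q$ rather than encircling it, again contradicting monodromy. Hence $e=0$ and $h=0$.

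Second, by Theorem \ref{th-6} the monodromic point $q$ is a center or a focus, and in either case it may be regarded as having the finite sectorial decomposition property with $e=h=0$ (no hyperbolic or elliptic sectors being present). Substituting into Theorem \ref{th-7} gives the index $(e-h)/2+1 = 1$.

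The delicate point I expect to need care is the passage in the second step: centers and foci are singled out in Theorem \ref{th-6} as cases distinct from a genuine sectorial decomposition, so one should check that the index formula legitimately applies with $e=h=0$. For the center this can be bypassed entirely by Proposition \ref{pr-2}: any sufficiently small periodic orbit surrounds only $q$, so the sum of the enclosed indices, namely the index of $q$, equals $1$. For the focus, where no periodic orbit is available, I would instead compute the index directly as the winding number of $\mathscr{X}$ along a small circle around $q$: since all nearby orbits cross such a circle transversally and spiral consistently around $q$, the field makes exactly one complete turn, yielding index $1$. Either route confirms the value obtained from the sector count.
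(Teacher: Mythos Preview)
Your approach is essentially the same as the paper's: the paper states that the proposition ``follows immediately from Theorem \ref{th-7}'', and you make this explicit by observing that a monodromic singular point has $e=h=0$ so the Poincar\'{e} Index Formula yields index $1$. Your additional care about whether centers and foci fall under the hypotheses of Theorem \ref{th-7}, together with the alternative arguments via Proposition \ref{pr-2} (for centers) and a direct winding-number computation (for foci), goes beyond what the paper spells out but is consistent with its intended one-line justification.
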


\section{Proof of Theorem \ref{th-3}}\label{se-5}
The main purpose of this section is to prove Theorem \ref{th-3}.
\begin{lemma}\label{le-1}
 Let $F=\left(f,g\right):\mathbb{R}^2\rightarrow \mathbb{R}^2$ be a polynomial map with nowhere zero Jacobian determinant such that $F\left(0,0\right)=\left(0,0\right)$.  If $\emph{gcd}(f,g)$ is the greatest common divisor of $f$ and $g$, then $y\nmid\emph{gcd}(f,g)$ and $x\nmid\emph{gcd}(f,g)$.
\end{lemma}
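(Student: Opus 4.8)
The plan is to argue by contradiction, exploiting the fact that a common linear factor of $f$ and $g$ would force $\det DF$ to vanish along an entire line. Since the roles of $x$ and $y$ are completely symmetric in the statement, it suffices to prove that $y\nmid\gcd(f,g)$; the assertion $x\nmid\gcd(f,g)$ then follows by interchanging $x$ and $y$ throughout.

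First I would suppose, for contradiction, that $y\mid\gcd(f,g)$. Then $y$ divides both $f$ and $g$, so I may write $f=y\hat f$ and $g=y\hat g$ for suitable polynomials $\hat f,\hat g\in\mathbb{R}[x,y]$. The next step is to substitute these into the Jacobian determinant $\det DF=f_xg_y-f_yg_x$ and track the factor of $y$. Using the product rule, $f_x=y\hat f_x$ and $g_x=y\hat g_x$ each carry an explicit factor $y$, whereas $f_y=\hat f+y\hat f_y$ and $g_y=\hat g+y\hat g_y$ do not. Multiplying out, every term of $f_xg_y-f_yg_x$ retains at least one factor of $y$, giving
\begin{align*}
\det DF=y\bigl(\hat f_x\hat g-\hat f\hat g_x\bigr)+y^2\bigl(\hat f_x\hat g_y-\hat f_y\hat g_x\bigr).
\end{align*}

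In other words $y\mid\det DF$. Restricting to the line $\{y=0\}$ then yields $\det DF(x,0)\equiv 0$, contradicting the hypothesis that $\det DF$ is nowhere zero on $\mathbb{R}^2$. This contradiction establishes $y\nmid\gcd(f,g)$, and the symmetric argument (writing $f=x\hat f$, $g=x\hat g$ and observing that $x\mid\det DF$) gives $x\nmid\gcd(f,g)$.

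The computation itself is routine; the only point requiring a little care is the observation that the two partials taken with respect to $x$ each keep the factor $y$, so that after forming the $2\times2$ determinant no term can lose it. I would therefore regard verifying this explicit factorization of $\det DF$ as the single substantive step, with everything else following immediately from the nowhere-vanishing hypothesis.
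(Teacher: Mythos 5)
Your proposal is correct and follows essentially the same route as the paper: argue by contradiction, write $f=y\hat f$, $g=y\hat g$, and observe that this forces $\det DF$ to vanish where $y=0$, contradicting the nowhere-zero hypothesis. The only difference is cosmetic: the paper contradicts the hypothesis at the single point $(0,0)$ and leaves the Jacobian computation implicit, whereas you exhibit the full factorization $\det DF=y\bigl(\hat f_x\hat g-\hat f\hat g_x\bigr)+y^2\bigl(\hat f_x\hat g_y-\hat f_y\hat g_x\bigr)$ and contradict along the whole line $\{y=0\}$.
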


\begin{proof}
Suppose that $y\mid\text{gcd}(f,g)$.  Then there exist two polynomials $\bar{f}_1\left(x,y\right)$ and $\bar{g}_1\left(x,y\right)$ such that $f=y\bar{f}_1$ and $g=y\bar{g}_1$. This is in contraction with $\text{det}DF\left(0,0\right)\neq0$. Thus, $y\nmid\text{gcd}(f,g)$. By a similar way, one can prove that $x\nmid\text{gcd}(f,g)$ also holds.
\end{proof}
The following theorem is due to Mazzi and Sabatini \cite{MR969422}.
\begin{theorem}\label{th-15}
Assume that system \eqref{eq-3} is $C^k$ with an isolated singular point $q$ and $k\in\mathbb{N}^+\cup\{\infty\}$. Then the singular point $q$ is a center if and only if there exists a $C^k$ first integral with an isolated minimum at $q$.
\end{theorem}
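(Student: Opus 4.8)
The plan is to prove the two implications of Theorem \ref{th-15} separately, leaning on the qualitative machinery already recorded. Write $\mathscr{X}=(P,Q)$ for the field of system \eqref{eq-3} and assume, after a translation, that the isolated singular point $q$ is the origin.

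For the direction $(\Leftarrow)$ I would argue that the existence of such a first integral forces $q$ to be a center. Suppose $H$ is a $C^k$ first integral with an isolated minimum at $q$, normalized so that $H(q)=0$; then $H>0$ on a punctured neighbourhood, and since $q$ is isolated I can fix $r>0$ with $q$ the only singular point of $\mathscr{X}$ in $\overline{B_r(q)}$ and $H>0$ on $\overline{B_r(q)}\setminus\{q\}$. Because $q$ is an isolated minimum, the connected component $D_c$ of $\{x\in\overline{B_r(q)}:H(x)\ls c\}$ containing $q$ shrinks to $q$ as $c\to0^+$, so for small $c$ the set $D_c$ is a compact neighbourhood of $q$ contained in $B_r(q)$; it is invariant since $H$ is constant along orbits. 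I would then apply the Poincar\'{e}--Bendixson Theorem (Theorem \ref{th-10}) with $K=D_c$: for $p\in D_c\setminus\{q\}$ the set $\omega(p)$ is a singular point, a periodic orbit, or a polycycle, and $H\equiv H(p)>0$ on $\omega(p)$ by continuity. The singular-point case forces $0=H(q)=H(p)>0$, and the polycycle case contains $q$, forcing $H=H(q)=0$ there; both are impossible. If $\omega(p)$ is a periodic orbit $\gamma$ with the orbit of $p$ distinct from $\gamma$, then the orbit of $p$ spirals to $\gamma$; reading $H$ along a $C^k$ transversal $\Sigma$ to $\gamma$ produces a continuous function $h$ on $\Sigma$ invariant under the first-return map $\pi$, and since $\pi^n\to\gamma\cap\Sigma$ and $h$ is continuous, $h$ is constant, so $H$ is locally constant near $\gamma$ --- contradicting the defining requirement that a first integral be non-locally constant. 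Hence the orbit of $p$ equals $\gamma$ and is periodic, $D_c\setminus\{q\}$ is filled with periodic orbits, and $q$ is a center.

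For the converse $(\Rightarrow)$ I would construct the first integral by hand. Suppose $q$ is a center, with a neighbourhood $U$ foliated by periodic orbits $\gamma_p$. Fixing a $C^\infty$ positive-definite $V$ with isolated minimum $V(q)=0$ (say $V(x)=\|x-q\|^2$), I would set $H(q)=0$ and, for $p\ne q$, let $H(p)$ be a quantity manifestly constant along $\gamma_p$ --- the signed area enclosed by $\gamma_p$, or a suitably normalized orbit average of $V$. Any such $H$ is then a first integral with $H>0$ off $q$ and an isolated minimum at $q$; and since the flow $\varphi(t,p)$ of the $C^k$ field depends $C^k$-smoothly on $p$ and the return time to a transversal is $C^k$ on $U\setminus\{q\}$ by the implicit function theorem, $H$ is $C^k$ on the punctured neighbourhood.

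The hard part will be the $C^k$ regularity of $H$ at $q$ itself, and confirming that the minimum there is genuinely isolated. The difficulty is concentrated at the singular point because in the intended application $q$ is degenerate (vanishing linear part): one cannot invoke a $C^k$ linearization, and the period function may blow up as $p\to q$, so the averaging must be arranged to neutralise that blow-up. My approach would be to estimate $H$ and its derivatives up to order $k$ near $q$ using only the $C^k$ bounds on $\mathscr{X}$ together with the monodromy of the center, matching the $k$-jet of $H$ at $q$ to that of $V$; equivalently, one builds $H$ from a $C^k$ monotone height along a transversal and checks that its flow-saturation extends $C^k$ across $q$. This regularity-at-a-degenerate-center step is the genuine technical core; by contrast the implication $(\Leftarrow)$ is routine given the Poincar\'{e}--Bendixson Theorem and the non-local-constancy built into the definition of a first integral.
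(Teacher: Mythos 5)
The first thing to note is that the paper does not prove Theorem \ref{th-15} at all: it is quoted as a known result of Mazzi and Sabatini \cite{MR969422}, so there is no in-paper argument to compare with, and your proposal must stand on its own as a proof of that cited theorem. Your direction $(\Leftarrow)$ is essentially sound: sublevel-set trapping plus Poincar\'e--Bendixson, with the singular-point and polycycle cases excluded by continuity of $H$, is the right route, and in the remaining case your return-map argument does show that $H$ is constant on a nonempty open set (strictly speaking only a \emph{one-sided} annular region abutting $\gamma$, since on the far side of $\gamma$ the iterates of $\pi$ need not converge to $\gamma\cap\Sigma$; so ``locally constant near $\gamma$'' is an overstatement, though harmless). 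The real caveat is that the contradiction you then draw requires reading ``non-locally constant'' in the strong sense actually used by Mazzi--Sabatini: \emph{not constant on any open subset} of $U$. Under the weaker literal reading (``$H$ is not a locally constant function''), your step proves nothing, and in fact the theorem becomes false: take $\dot r=g(r)$, $\dot\theta=1$, where $g\ge 0$ is smooth, flat at $r=0$, and positive exactly on annuli $a_n<r<b_n$ accumulating at the origin, and take $H=\eta(r)$ with $\eta$ smooth, flat at $0$, nondecreasing, constant exactly where $g>0$ and strictly increasing in between; then $H$ is a $C^\infty$ non-constant function, constant on every orbit, with a strict isolated minimum at the origin, yet the origin is not a center. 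So $(\Leftarrow)$ is correct only once the strong form of the definition is made explicit.

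The genuine gap is in $(\Rightarrow)$. You name a candidate (enclosed area $A$, or an orbit average), check it is constant on orbits and $C^k$ away from $q$, and then explicitly defer both (i) continuity at $q$ and isolatedness of the minimum --- which already requires proving that the periodic orbits of a center shrink to $q$, i.e.\ that the nested closed disks they bound intersect only in $\{q\}$, a fact you use but never establish --- and (ii) the $C^k$ regularity at $q$. Point (ii) is not a technicality that can be ``estimated away''; it is where the theorem lives, and your candidate genuinely fails it: for the degenerate center $\dot x=-y^3$, $\dot y=x^3$ the orbits are $x^4+y^4=c$ and the enclosed area is a constant multiple of $\sqrt{x^4+y^4}$, which is not even $C^2$ at the origin (its second derivatives are homogeneous of degree $0$ and non-constant), although $x^4+y^4$ is a $C^\infty$ first integral. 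One must therefore post-compose $A$ with a reparametrization $\phi$ flat at $0$, whose flatness is adapted to the growth of the derivatives of $A$ near $q$ (compactness of the level sets $\{A=a\}$ bounds $|D^jA|$ by a function of the value $a$, and then Fa\`a di Bruno-type estimates force all derivatives of $\phi\circ A$ of order $\le k$ to vanish at $q$). Your proposal announces this step (``matching the $k$-jet'', ``the genuine technical core'') but does not carry it out; as written, only one implication is proved.
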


\begin{proposition}\label{pr-1}
  Let $F=\left(f,g\right):\mathbb{R}^2\rightarrow \mathbb{R}^2$ be a polynomial map with nowhere zero Jacobian determinant such that $F\left(0,0\right)=\left(0,0\right)$. Then $q$ is a singular point of the Hamiltonian vector field \eqref{eq-1} if and only if $F\left(q\right)=\left(0,0\right)$. In this case, this singular point $q$ is a center of vector field \eqref{eq-1}.
\end{proposition}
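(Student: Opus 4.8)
The plan is to prove the two claims separately and then identify the nature of the singular point. For the first equivalence, I would compute the vector field \eqref{eq-1} explicitly. Writing $\mathcal{X} = (-ff_y - gg_y,\; ff_x + gg_x)$, a point $q$ is a singular point precisely when both components vanish, i.e.
\begin{align*}
ff_y + gg_y = 0, \qquad ff_x + gg_x = 0
\end{align*}
at $q$. The key observation is that this system can be written in matrix form as $DF^{T}\cdot (f,g)^{T} = \mathbf{0}$, where the coefficient matrix is the transpose of the Jacobian $DF$. Since $\det DF$ is nowhere zero by hypothesis, this matrix is invertible at every point, so the only solution is $(f(q), g(q)) = (0,0)$, that is, $F(q) = (0,0)$. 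Conversely, if $F(q) = (0,0)$ then $f(q) = g(q) = 0$ and both components of $\mathcal{X}$ vanish at $q$. This handles the biconditional cleanly.

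For the second claim, that such a singular point $q$ is a center, I would use the Hamiltonian structure together with Theorem \ref{th-15}. Observe that $\mathcal{X}$ is Hamiltonian with Hamiltonian function $H = \tfrac{1}{2}(f^2 + g^2) = \tfrac{1}{2}I(x,y)$: indeed $H_x = ff_x + gg_x$ and $H_y = ff_y + gg_y$, so $\mathcal{X} = (-H_y, H_x)$, confirming that $H$ is a first integral. At the singular point $q$ we have $f(q) = g(q) = 0$, hence $H(q) = 0$, and since $H = \tfrac{1}{2}(f^2 + g^2) \geq 0$ everywhere with equality exactly on the (isolated, by Lemma \ref{le-1} and the nonvanishing Jacobian) zero set of $F$, the point $q$ is an isolated minimum of the $C^{\infty}$ (in fact polynomial) first integral $H$. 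Theorem \ref{th-15} then immediately yields that $q$ is a center.

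The only point requiring care is verifying that $q$ is an \emph{isolated} singular point and an \emph{isolated} minimum of $H$, so that Theorem \ref{th-15} applies. Since $\det DF(q) \neq 0$, the map $F$ is a local diffeomorphism near $q$, so $F^{-1}(0,0)$ consists of isolated points; equivalently the common zero set of $f$ and $g$ is discrete, which makes each singular point of $\mathcal{X}$ isolated and makes the minimum of $H = \tfrac12(f^2+g^2)$ at $q$ strict on a punctured neighborhood. I expect this isolation argument to be the main (though modest) obstacle, as it is where the nonvanishing Jacobian hypothesis is genuinely used a second time; everything else reduces to the elementary matrix computation and the direct application of Theorem \ref{th-15}.
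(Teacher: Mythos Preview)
Your proposal is correct and follows essentially the same route as the paper: rewrite the singular-point condition as an invertible linear system in $(f,g)$ (the paper uses the matrix $\begin{pmatrix}-f_y & -g_y\\ f_x & g_x\end{pmatrix}$, which has determinant $\det DF$, in place of your $DF^T$, but the argument is identical), then apply Theorem~\ref{th-15} to the Hamiltonian $H=\tfrac12(f^2+g^2)$, using the nonvanishing Jacobian to get isolation of the minimum. The appeal to Lemma~\ref{le-1} is unnecessary---the local-diffeomorphism argument you give afterwards is exactly what the paper uses and already suffices.
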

\begin{proof}
The Hamiltonian vector field $\mathcal{X}$ can be written as
\begin{align*}
&\left(
\begin{array}{l}
\dot{x}\\
\dot{y}\\
\end{array}
\right)=
 \left(
\begin{array}{cc}
-f_y&-g_y \\
f_x&g_x\\
\end{array}
\right)
 \left(
\begin{array}{c}
f\\
g\\
\end{array}
\right).
\end{align*}
 Indeed, $q$ is a singular point of $\mathcal{X}$ if and only if
 \begin{align*}
&\left(
\begin{array}{cc}
-f_y\left(q\right)&-g_y\left(q\right) \\
f_x\left(q\right)&g_x\left(q\right)\\
\end{array}
\right)
 \left(
\begin{array}{c}
f\left(q\right)\\
g\left(q\right)\\
\end{array}
\right)=\left(
\begin{array}{l}
0\\
0\\
\end{array}
\right).
\end{align*}
The sufficiency is obvious. Since $\text{det D} F\left(q\right)\neq0$, $f\left(q\right)=g\left(q\right)=0$. The necessity holds.

Since $\text{det D} F\left(q\right)\neq0$, singular point $q$ is an isolated. The Hamiltonian vector field \eqref{eq-1} has the Hamiltonian
\begin{align}\label{eq-34}
  &H\left(x,y\right)=\frac{f^2\left(x,y\right)+g^2\left(x,y\right)}{2}.
\end{align}
Clearly, $q$ is an isolated minimum of $H$, that is, $H\left(x,y\right)\geq H\left(q\right)=0$. By Theorem \ref{th-15}, $q$ is a center. This proves the Proposition \ref{pr-1}.
\end{proof}
%
%

\begin{lemma}\label{le-8}
  Let $f\left(x,y\right)=\sum\nolimits_{i=1}^{n}f_i\left(x,y\right)$, $g\left(x,y\right)=\sum\nolimits_{j=1}^{m}g_j\left(x,y\right)$ and $F=\left(f,g\right)$ with nowhere zero Jacobian determinant $\det DF\left(x,y\right)$, where $f_i\left(x,y\right)$ and $g_j\left(x,y\right)$ are homogeneous polynomials of degree $i$ and $j$, respectively. Then $f_1\left(x,y\right)=g_1\left(x,y\right)=0$ if and only if $\left(x,y\right)=\left(0,0\right)$.
\end{lemma}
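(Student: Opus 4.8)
The plan is to identify the homogeneous degree-one parts $f_1$ and $g_1$ with the linear forms determined by the Jacobian matrix of $F$ at the origin, and then to observe that nonvanishing of the Jacobian determinant forces the associated homogeneous linear system to have only the trivial solution.

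First I would write the two linear parts explicitly as $f_1(x,y)=f_x(0,0)\,x+f_y(0,0)\,y$ and $g_1(x,y)=g_x(0,0)\,x+g_y(0,0)\,y$. This identification is legitimate because $f=\sum_{i=1}^{n}f_i$ and $g=\sum_{j=1}^{m}g_j$ carry no constant term (the sums start at degree one, consistent with $F(0,0)=(0,0)$), and the homogeneous part of degree one of a polynomial is precisely the value of its differential at the origin. With this in hand, the \emph{if} direction is immediate: any homogeneous polynomial of positive degree vanishes at $(0,0)$, so $f_1(0,0)=g_1(0,0)=0$.

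For the \emph{only if} direction, I would regard the equations $f_1(x,y)=g_1(x,y)=0$ as a homogeneous linear system in the unknowns $(x,y)$ whose coefficient matrix is exactly
\[
\begin{pmatrix} f_x(0,0) & f_y(0,0) \\ g_x(0,0) & g_y(0,0) \end{pmatrix}=DF(0,0).
\]
Its determinant equals $\det DF(0,0)$, which is nonzero by the standing hypothesis that $\det DF$ is nowhere vanishing. Hence the matrix is invertible, and the only solution of the system is $(x,y)=(0,0)$, which completes the proof.

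There is no substantial obstacle here; the whole content of the argument is the recognition that the coefficients of the linear forms $f_1$ and $g_1$ are precisely the entries of $DF(0,0)$, so that the nonvanishing Jacobian hypothesis translates directly into invertibility of the linear system. I would expect the write-up to be a few lines only, and the lemma will later be invoked to rule out that the leading-degree-one behaviour of $F$ degenerates along a line through the origin.
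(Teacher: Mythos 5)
Your proposal is correct and follows essentially the same argument as the paper: both identify the coefficient matrix of the linear system $f_1(x,y)=g_1(x,y)=0$ with $DF(0,0)$, whose determinant is nonzero by hypothesis, forcing the trivial solution. Your write-up is only slightly more explicit in justifying that the degree-one parts coincide with the differential at the origin and in noting the trivial converse direction.
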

\begin{proof}
Since $f_1\left(x,y\right)$ and $g_1\left(x,y\right)=0$ are linear, we have
\begin{align}\label{eq-35}
&\left(
\begin{array}{l}
f_1(x,y)\\
g_1(x,y)\\
\end{array}
\right)=
 \left(
\begin{array}{cc}
f_{1x}& f_{1y}\\
g_{1x}&g_{1y}\\
\end{array}
\right)
 \left(
\begin{array}{c}
x\\
y\\
\end{array}
\right)=
 \left(
\begin{array}{c}
0\\
0\\
\end{array}
\right).
\end{align}
The determinant of the coefficient matrix of system \eqref{eq-35} is $\det DF\left(0,0\right)\neq0$. The lemma holds.
\end{proof}

\begin{lemma}\label{le-5}
Let $\phi\left(t,p\right)$ be the trajectory of  vector field \eqref{eq-1}  passing through the regular point $p\in\mathbb{R}^2$ for $t\in\mathbb{R}$.  Then one of the following statements holds.
\begin{enumerate}[(a)]
  \item $\omega\left(p\right)=\alpha\left(p\right)$ is a periodic orbit located at a period annulus.
  \item $\omega\left(p\right)\subset \mathbb{S}^1$ and $\alpha\left(p\right)\subset\mathbb{S}^1$.
\end{enumerate}
 Here $\mathbb{S}^1$ is the infinity of the Poincar\'{e} disc.
\end{lemma}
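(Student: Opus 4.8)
The plan is to exploit the Hamiltonian structure. By Proposition~\ref{pr-1} the field \eqref{eq-1} is Hamiltonian with Hamiltonian $H=(f^2+g^2)/2$ from \eqref{eq-34}, so $H$ is a first integral and every orbit is confined to a single level set $\{H=c\}$; moreover the singular points of \eqref{eq-1} are exactly the zeros of $F$, they are all centers, and at each of them $H=0$. Consequently $p$ is regular if and only if $c:=H(p)>0$, and the whole orbit $\phi(\cdot,p)$ lies in $\{H=c\}$. The key rigidity I intend to use is that, because of this first integral, each orbit on a positive level set is forced to be a full connected component of that level set, which rules out a priori the usual planar possibilities of spiralling toward a limit cycle or a polycycle.

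First I would show that every $c>0$ is a regular value of $H$. If $H(z)=c>0$ then $f^2(z)+g^2(z)=2c>0$, so $(f(z),g(z))\neq(0,0)$, whence by Proposition~\ref{pr-1} the point $z$ is not a singular point, i.e.\ $\mathcal{X}(z)\neq\mathbf 0$. Since $\mathcal{X}=(-H_y,H_x)$, this is equivalent to $\nabla H(z)\neq\mathbf 0$. Therefore $\{H=c\}$ is a closed, embedded one-dimensional submanifold of $\mathbb{R}^2$, and each of its connected components is diffeomorphic either to a circle or to a properly embedded line. On $\{H=c\}$ the field $\mathcal{X}$ is tangent (because $\dot H\equiv 0$) and nowhere zero. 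Let $\Gamma$ be the component of $\{H=c\}$ containing $p$. Since $\phi(\cdot,p)$ is complete (defined for all $t\in\mathbb{R}$ by hypothesis), stays in $\Gamma$, and since a complete nonsingular flow on a connected one-manifold covers the whole manifold, the orbit of $p$ coincides with all of $\Gamma$: a periodic orbit if $\Gamma$ is a circle, and a nonperiodic orbit otherwise.

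Next I would split according to the topology of $\Gamma$. If $\Gamma$ is a circle, the orbit of $p$ is periodic and $\omega(p)=\alpha(p)=\Gamma$. Because $c$ is a regular value, the curves $\{H=c'\}$ for $c'$ near $c$ foliate a neighbourhood of $\Gamma$ by closed curves which are again periodic orbits, so $\Gamma$ is a nonisolated periodic orbit lying in a period annulus, which is alternative~(a); by Propositions~\ref{pr-2} and \ref{pr-3} the disc bounded by $\Gamma$ contains exactly one singular point, a center, which is the centre of this annulus. If instead $\Gamma$ is a line, then $\Gamma$ is closed and noncompact in $\mathbb{R}^2$, hence unbounded and properly embedded, so both of its ends leave every compact subset of $\mathbb{R}^2$. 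As $t$ runs over $\mathbb{R}$ the orbit traverses $\Gamma$, so $\phi(t,p)\to\infty$ as $t\to\pm\infty$; equivalently, on the Poincar\'e disc $\omega(p)\subset\mathbb{S}^1$ and $\alpha(p)\subset\mathbb{S}^1$, which is alternative~(b). The two cases are exhaustive and mutually exclusive, since a bounded component is closed and bounded, hence compact, hence a circle, so there are no bounded nonperiodic orbits.

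The step I expect to be the main obstacle is the careful topological bookkeeping: verifying that $c>0$ is a regular value (so that $\{H=c\}$ is an embedded closed curve), identifying the orbit of $p$ with the full component $\Gamma$, and using the fact that a noncompact connected component of a closed planar one-manifold is a properly embedded line whose ends escape to infinity; translating ``escapes every compact set'' into ``$\omega(p),\alpha(p)\subset\mathbb{S}^1$'' on the Bendixson/Poincar\'e compactification (cf.\ Remark~\ref{re-3}) then follows. If one prefers not to invoke completeness of the flow, the same conclusion in case~(b) still holds, because a finite escape time likewise forces the solution to leave every compact set.
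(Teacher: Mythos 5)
Your proof is correct, but it takes a genuinely different route from the paper's. The paper disposes of the lemma in two sentences: by Proposition \ref{pr-1} every finite singular point of \eqref{eq-1} is a center, and then the Poincar\'{e}--Bendixson theorem (Theorem \ref{th-10}) leaves only the two alternatives, the point being that no orbit can have a center in its limit set (the surrounding periodic orbits act as barriers), so limit sets consisting of singular points or of polycycles are excluded. You never invoke Poincar\'{e}--Bendixson at all: you exploit the first integral $H=\left(f^2+g^2\right)/2$ of \eqref{eq-34}, note that every positive level of $H$ is regular (Proposition \ref{pr-1} together with $\mathcal{X}=\left(-H_y,H_x\right)$), and identify the orbit of $p$ with a full connected component $\Gamma$ of $\left\{H=c\right\}$, which is either a circle (a periodic orbit) or a properly embedded line whose ends escape every compact set, giving $\omega\left(p\right)\subset\mathbb{S}^1$ and $\alpha\left(p\right)\subset\mathbb{S}^1$. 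This is more elementary and actually delivers slightly more than the paper's argument states explicitly: it shows directly that the orbit of $p$ is \emph{itself} periodic in case (a), whereas upgrading ``$\omega\left(p\right)$ is a periodic orbit'' to ``the orbit of $p$ is periodic'' via Poincar\'{e}--Bendixson requires the additional (implicit) observation that a Hamiltonian orbit cannot spiral onto a limit cycle; your index-theoretic use of Propositions \ref{pr-2} and \ref{pr-3} also substantiates the phrase ``located at a period annulus''. One small point to tighten: with the paper's definition of period annulus (the maximal punctured neighborhood of a \emph{center} filled with periodic orbits, Section \ref{se-1}), you should also remark that every orbit strictly inside $\Gamma$, other than the unique center there, is periodic; this follows from your own level-set argument, since such an orbit is a full component of a positive regular level set that is closed and bounded, hence compact, hence a circle.
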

\begin{proof}
By Proposition \ref{pr-1}, each finite singular point $q$ of  vector field \eqref{eq-1} is a center. From Theorem \ref{th-10} it follows that $\omega\left(p\right)=\alpha\left(p\right)$ is a periodic orbit, or $\omega\left(p\right)\subset \mathbb{S}^1$ and $\alpha\left(p\right)\subset\mathbb{S}^1$.
\end{proof}

The limit sets of vector field $b\left(\mathcal{X}\right)$ are given in next proposition.
\begin{proposition}\label{pr-4}
  Let $\psi\left(t,\bar{p}\right)$ be the trajectory of vector field $b\left(\mathcal{X}\right)$   passing through the regular point $\bar{p}\in\mathbb{R}^2$ for $t\in\mathbb{R}$.  Then one of the following statements holds.
\begin{enumerate}[(a)]
  \item $\omega\left(\bar{p}\right)=\alpha\left(\bar{p}\right)$ is a periodic orbit located at a period annulus.
  \item $\omega\left(\bar{p}\right)=\alpha\left(\bar{p}\right)$ is the origin of vector field $b\left(\mathcal{X}\right)$ .
\end{enumerate}
\end{proposition}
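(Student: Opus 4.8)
The plan is to transport the dichotomy of Lemma \ref{le-5} from $\mathcal{X}$ to $b(\mathcal{X})$ through the Bendixson compactification, exploiting that this compactification collapses the entire equator at infinity of $\mathcal{X}$ onto the single origin of $b(\mathcal{X})$. First I would make the orbit correspondence precise. By the construction in subsection \ref{sub-1}, the restriction of $b(\mathcal{X})$ to the chart $U_N$ is the field $\mathcal{X}$ itself, whereas on the chart $U_S$ (the $uv$-plane) it is the polynomial field \eqref{eq-5}, obtained after the time rescaling by the factor $(u^2+v^2)^d$. The transition map $p_S\circ p_N^{-1}$ is the inversion \eqref{eq-4}, an analytic involution of $\mathbb{R}^2\setminus\{(0,0)\}$ onto itself satisfying $u^2+v^2=1/(x^2+y^2)$. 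It therefore sends the origin of the $xy$-plane to infinity of the $uv$-plane, and the infinity of the $xy$-plane, that is, the equator $\mathbb{S}^1$ of the Poincar\'e disc, to the origin of the $uv$-plane. Since $(u^2+v^2)^d$ is strictly positive off the origin, this inversion is an orbit equivalence that \emph{preserves the direction of time} between $\mathcal{X}$ on $\mathbb{R}^2\setminus\{(0,0)\}$ and $b(\mathcal{X})$ on the $uv$-plane minus its origin. Consequently it carries $\omega$- and $\alpha$-limit sets to $\omega$- and $\alpha$-limit sets respectively, periodic orbits to periodic orbits, and period annuli to period annuli.

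Next, given a regular point $\bar p$ of $b(\mathcal{X})$ in the $uv$-plane, it is finite and distinct from the origin (the origin being a singular point by Remark \ref{re-2}); let $p\in\mathbb{R}^2\setminus\{(0,0)\}$ be its image under the inversion, which is a regular point of $\mathcal{X}$. I then apply Lemma \ref{le-5} to $p$. In case (a) of that lemma, $\omega(p)=\alpha(p)$ is a periodic orbit of $\mathcal{X}$ lying in a period annulus; being a periodic orbit of a Hamiltonian field it is a bounded simple closed curve avoiding the singular point at the origin, so its inverted image is a finite periodic orbit of $b(\mathcal{X})$ contained in a period annulus, and it equals $\omega(\bar p)=\alpha(\bar p)$. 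This gives statement (a) of the proposition.

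In case (b) of Lemma \ref{le-5} one has $\omega(p)\subset\mathbb{S}^1$ and $\alpha(p)\subset\mathbb{S}^1$. Since the Poincar\'e disc is compact, this forces the forward and backward trajectories through $p$ to leave every compact subset of $\mathbb{R}^2$, i.e. $\left|\phi(t,p)\right|\to+\infty$ as $t\to\pm\infty$. Using $u^2+v^2=1/(x^2+y^2)$, the orbit of $b(\mathcal{X})$ through $\bar p$, being the inverted image of the orbit through $p$, then tends to the origin of the $uv$-plane in both forward and backward time; hence $\omega(\bar p)=\alpha(\bar p)$ is precisely the origin of $b(\mathcal{X})$, which is statement (b). The two cases are exhaustive and exclusive because they are already so in Lemma \ref{le-5}, completing the proof.

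The step I expect to be the main obstacle is the careful justification that the Bendixson compactification acts as a genuine time-orientation-preserving orbit equivalence, so that the $\omega$- and $\alpha$-limit sets are transported and not interchanged; here the positivity of the rescaling factor $(u^2+v^2)^d$ is essential. The second delicate point is the collapse of the whole equator $\mathbb{S}^1$ to a single point: it is exactly this collapse that converts the possibly nondegenerate set $\omega(p)\subset\mathbb{S}^1$ of Lemma \ref{le-5}(b) into the single origin of $b(\mathcal{X})$, turning the $\mathbb{S}^1$-valued alternative into the clean single-point alternative asserted in the proposition.
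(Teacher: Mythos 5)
Your proof is correct and follows essentially the same route as the paper: the paper's own proof simply invokes Lemma \ref{le-5} and observes that on the Bendixson sphere the $\alpha$- and $\omega$-limit sets of any orbit of $b\left(\mathcal{X}\right)$ can only be the north pole $N$ (the origin in the $uv$-chart) or a periodic orbit in a period annulus. You have merely made explicit the details the paper leaves implicit, namely that the inversion is a time-orientation-preserving orbit equivalence off the singular points and that $\omega(p)\subset\mathbb{S}^1$ forces escape to infinity, hence convergence to the origin of $b\left(\mathcal{X}\right)$.
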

\begin{proof}
By Lemma \ref{le-5}, on the Bendixson sphere $\mathbb{S}^2$ (see Figure \ref{Fig-1}) the $\alpha$ and $\omega$ limit sets of each orbit
of the vector field $b\left(\mathcal{X}\right)$  can only be the north pole $N$  or a periodic orbit located at a period annulus. This proposition is confirmed.
\end{proof}

Let $f\left(x,y\right)=\sum\nolimits_{i=1}^{n}f_i\left(x,y\right)$, $g\left(x,y\right)=\sum\nolimits_{j=1}^{m}g_j\left(x,y\right)$ and $d=\max\{n,m\}$, where $f_i\left(x,y\right)$ and $g_j\left(x,y\right)$ are homogeneous polynomials of degree $i$ and $j$, respectively. From the equation \eqref{eq-5}, the Bendixson compactification of vector field $\mathcal{X}$ is given by
\begin{align}\label{eq-6}
\begin{cases}
\dot{u}=&\sum\limits_{i=1}^d\sum\limits_{j=1}^d\left(u^2+v^2\right)^{2d-i-j}[\left(u^2-v^2\right)\left(f_i\left(u,v\right)f_{jy}\left(u,v\right)
+g_i\left(u,v\right)g_{jy}\left(u,v\right)\right)\\
&-2uv\left(f_i\left(u,v\right)f_{jx}\left(u,v\right)+g_i\left(u,v\right)g_{jx}\left(u,v\right)\right)],\\
\specialrule{0em}{3pt}{3pt}
\dot{v}=&\sum\limits_{i=1}^d\sum\limits_{j=1}^d\left(u^2+v^2\right)^{2d-i-j}[\left(u^2-v^2\right)\left(f_i\left(u,v\right)f_{jx}\left(u,v\right)
+g_i\left(u,v\right)g_{jx}\left(u,v\right)\right)\\
&+2uv\left(f_i\left(u,v\right)f_{jy}\left(u,v\right)+g_i\left(u,v\right)g_{jy}\left(u,v\right)\right)].
\end{cases}
\end{align}
It is easy to check that the origin of system \eqref{eq-6} is degenerate.

The local dynamical behavior of vector field $b\left(\mathcal{X}\right)$ on the Poincar\'{e} disk is given as follows.
\begin{proposition}\label{pr-5}
  Let $F=\left(f,g\right):\mathbb{R}^2\rightarrow \mathbb{R}^2$ be a polynomial map such that \emph{det} $DF\left(x,y\right)$ is nowhere zero and $F\left(0,0\right)=\left(0,0\right)$. Then the following statements hold.
  \begin{enumerate}[(a)]
    \item The finite singular points of polynomial vector field $b\left(\mathcal{X}\right)$ other than its origin are centers.
    \item For polynomial vector field $b\left(\mathcal{X}\right)$, there are no infinite singular points on the Poincar\'{e} disc.
    \item  The origin of the polynomial vector field $b\left(\mathcal{X}\right)$ has no parabolic sectors.
  \end{enumerate}
\end{proposition}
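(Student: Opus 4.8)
The plan is to treat the three parts separately, exploiting that the Bendixson chart map \eqref{eq-4}, namely $u=x/(x^2+y^2)$, $v=y/(x^2+y^2)$, is an analytic diffeomorphism of $\mathbb{R}^2\setminus\{(0,0)\}$ onto itself which interchanges the origin and infinity, and that the factor $(u^2+v^2)^d$ in \eqref{eq-6} is a strictly positive time rescaling away from $(u,v)=(0,0)$. Consequently the chart map together with this rescaling leaves every orbit configuration unchanged except possibly at the single point $(u,v)=(0,0)$, which is precisely the image of the infinity of $\mathcal{X}$. Thus the three parts are, respectively, the analysis of $b(\mathcal{X})$ at its finite nonzero singular points, at its infinity, and at its origin.

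For part (a): any finite singular point of $b(\mathcal{X})$ other than $(0,0)$ is, by the remark above, the image under \eqref{eq-4} of a singular point $q\neq(0,0)$ of $\mathcal{X}$ (the extra factor $(u^2+v^2)^d$ is nonzero there, so it creates no spurious singularities). By Proposition \ref{pr-1} every such $q$ is a center of $\mathcal{X}$, and since \eqref{eq-4} is a local diffeomorphism near $q$ carrying periodic orbits to periodic orbits while the positive factor $(u^2+v^2)^d$ merely reparametrizes time, the image point is again encircled by periodic orbits, i.e.\ a center.

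For part (b): the equator of the Poincar\'e disc of $b(\mathcal{X})$ corresponds to $(u,v)\to\infty$, that is, to the origin of $\mathcal{X}$; I must show it carries no singular point. Writing $f=f_1+\cdots$, $g=g_1+\cdots$ with $f_1,g_1$ the (nondegenerate) linear parts, the maximal degree $4d-1$ in the right-hand sides of \eqref{eq-6} is attained only by the index pair $i=j=1$, giving the leading homogeneous parts $U_{\mathrm{top}}=(u^2+v^2)^{2d-2}[(u^2-v^2)B-2uvA]$ and $V_{\mathrm{top}}=(u^2+v^2)^{2d-2}[(u^2-v^2)A+2uvB]$, where $A=f_1f_{1x}+g_1g_{1x}$ and $B=f_1f_{1y}+g_1g_{1y}$. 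The infinite singular points are exactly the zeros on $\mathbb{S}^1$ of the tangential polynomial $uV_{\mathrm{top}}-vU_{\mathrm{top}}$, and a direct simplification collapses it to
\[
uV_{\mathrm{top}}-vU_{\mathrm{top}}=(u^2+v^2)^{2d-1}(uA+vB)=(u^2+v^2)^{2d-1}\big(f_1^2(u,v)+g_1^2(u,v)\big),
\]
the last equality using Euler's identity $uf_{1x}+vf_{1y}=f_1$ and its analogue for $g_1$. On $\mathbb{S}^1$ this equals $f_1^2+g_1^2$, which is strictly positive by Lemma \ref{le-8}; hence the tangential component never vanishes and there are no infinite singular points (this also confirms that the degree of the system is indeed $4d-1$, with no leading cancellation). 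I expect this computation — carrying out the Poincar\'e-compactification reduction and verifying the clean collapse above — to be the main obstacle, as it is the only genuinely computational step and relies on the Hamiltonian form of $\mathcal{X}$ through Euler's theorem.

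For part (c): this follows immediately from Proposition \ref{pr-4}. Every orbit $\psi(t,\bar p)$ of $b(\mathcal{X})$ satisfies $\alpha(\bar p)=\omega(\bar p)$, this common set being either a periodic orbit (not passing through the origin) or the origin itself. A parabolic sector at the origin would, by definition, consist of orbits flowing into the origin in exactly one time direction, i.e.\ orbits having the origin as $\omega$-limit but not as $\alpha$-limit (or conversely); this is incompatible with $\alpha(\bar p)=\omega(\bar p)$. Therefore the origin of $b(\mathcal{X})$ has no parabolic sectors, and in view of Theorem \ref{th-6} it can only be a center, a focus, or a finite union of hyperbolic and elliptic sectors.
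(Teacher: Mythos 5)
Your proposal is correct, and parts (a) and (c) coincide with the paper's own arguments: for (a) the paper likewise reduces to Proposition \ref{pr-1} (your spelled-out justification --- that away from the origin $b\left(\mathcal{X}\right)$ is the pushforward of $\mathcal{X}$ under the diffeomorphism \eqref{eq-4} of the punctured plane times a positive rescaling, so centers map to centers --- is exactly what the paper leaves implicit), and for (c) the paper gives verbatim your contradiction: a point $\bar{p}$ in a parabolic sector would have the origin as $\omega$-limit, whence by Proposition \ref{pr-4} also as $\alpha$-limit, making the sector elliptic. Part (b) is where you genuinely diverge. The paper passes to the two Poincar\'{e} charts $u=1/\bar{v},\ v=\bar{u}/\bar{v}$ and $u=\bar{u}/\bar{v},\ v=1/\bar{v}$, writes out the full transformed systems \eqref{eq-9} and \eqref{eq-11}, restricts to $\bar{v}=0$, and concludes from Lemma \ref{le-8}; the factor $j$ appearing in \eqref{eq-9} is produced by the same Euler identity you invoke. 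You instead use the chart-free characterization of infinite singular points as the zeros on $\mathbb{S}^1$ of the tangential polynomial $uV_{\mathrm{top}}-vU_{\mathrm{top}}$ formed from the top homogeneous terms (degree $4d-1$, coming only from the pair $i=j=1$), and your collapse is correct: $u\left[\left(u^2-v^2\right)A+2uvB\right]-v\left[\left(u^2-v^2\right)B-2uvA\right]=\left(u^2+v^2\right)\left(uA+vB\right)$, and $uA+vB=f_1^2+g_1^2$ by Euler, which is positive off the origin by Lemma \ref{le-8}. Your route is shorter and isolates the single identity doing the work, but it must (and you correctly do) rule out cancellation of the leading terms so that the system's degree really is $4d-1$ and the quoted criterion applies with these top terms; the paper's route is more pedestrian but self-contained, needing no separate degree discussion since it computes in the charts directly. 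Both proofs ultimately rest on the same two ingredients: Euler's identity applied to the linear parts $f_1,g_1$, and Lemma \ref{le-8}.
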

\begin{proof}
$(a)$ By Proposition \ref{pr-1}, statement $(a)$ can be easily proved.

$(b)$ Taking the Poincar\'{e} transformation $u=1/\bar{v}$, $v=\bar{u}/\bar{v}$ and rescaling the time $dt=\bar{v}^{4d-1}d\tau$, system \eqref{eq-6} can be written as
\begin{align}\label{eq-9}
\begin{cases}
\dot{\bar{u}}=&\left(1+\bar{u}^2\right)\sum\limits_{i=1}^d\sum\limits_{j=1}^dj\bar{v}^{i+j-2}\left(1+\bar{u}^2\right)^{2d-i-j}\left(f_i\left(1,\bar{u}\right)f_{j}
\left(1,\bar{u}\right)+g_i\left(1,\bar{u}\right)g_{j}\left(1,\bar{u}\right)\right),\\
\specialrule{0em}{3pt}{3pt}
\dot{\bar{v}}=&-\bar{v}\sum\limits_{i=1}^d\sum\limits_{j=1}^d\bar{v}^{i+j-2}\left(1+\bar{u}^2\right)^{2d-i-j}[\left(1-\bar{u}^2\right)\left(f_i\left(1,\bar{u}\right)f_{jy}\left(1,\bar{u}\right)
+g_i\left(1,\bar{u}\right)g_{jy}\left(1,\bar{u}\right)\right)\\
&-2\bar{u}\left(f_i\left(1,\bar{u}\right)f_{jx}\left(1,\bar{u}\right)+g_i\left(1,\bar{u}\right)g_{jx}\left(1,\bar{u}\right)\right)].\\
\end{cases}
\end{align}
Applying Lemma \ref{le-8}, we have that the system \eqref{eq-9} has no singular points on the $\bar{u}$-axis.

Using the Poincar\'{e} transformation $u=\bar{u}/\bar{v}$, $v=1/\bar{v}$ with the scaling $dt=\bar{v}^{4d-1}d\tau$, system \eqref{eq-6} becomes
\begin{align}\label{eq-11}
\begin{cases}
\dot{\bar{u}}=&-\left(1+\bar{u}^2\right)\sum\limits_{i=1}^d\sum\limits_{j=1}^dj\bar{v}^{i+j-2}\left(1+\bar{u}^2\right)^{2d-i-j}\left(f_i\left(\bar{u},1\right)f_{j}
\left(\bar{u},1\right)+g_i\left(\bar{u},1\right)g_{j}\left(\bar{u},1\right)\right),\\
\specialrule{0em}{3pt}{3pt}
\dot{\bar{v}}=&-\bar{v}\sum\limits_{i=1}^d\sum\limits_{j=1}^d\bar{v}^{i+j-2}\left(1+\bar{u}^2\right)^{2d-i-j}[\left(\bar{u}^2-1\right)\left(f_i\left(\bar{u},1\right)
f_{jx}\left(\bar{u},1\right)+g_i\left(\bar{u},1\right)g_{jx}\left(\bar{u},1\right)\right)\\
&+2\bar{u}\left(f_i\left(\bar{u},1\right)f_{jy}\left(\bar{u},1\right)+g_i\left(\bar{u},1\right)g_{jy}\left(\bar{u},1\right)\right)].
\end{cases}
\end{align}
Similarly, the origin is not a singular point of system \eqref{eq-11}.  The statement $(b)$ holds.

$(c)$ Assume that the origin of $b\left(\mathcal{X}\right)$ has a attracting parabolic sector $V$. Let $\psi\left(t,\bar{p}\right)$ be the trajectory of vector field $b\left(\mathcal{X}\right)$  passing through the regular point $\bar{p}\in V$. Then $\omega\left(\bar{p}\right)$ is the origin of $b\left(\mathcal{X}\right)$. From Proposition \ref{pr-4}, $\alpha\left(\bar{p}\right)$ is also the origin of $b\left(\mathcal{X}\right)$, which means that $V$ is an elliptic sector. This is a contradiction. So statement $(c)$ is confirmed.
\end{proof}

\begin{proof}[{\bf Proof of Theorem \ref{th-3}}]
For clarity, we will split the proof into three steps.

Firstly, we prove that $(a)\Rightarrow (b) \Rightarrow (c)\Rightarrow (d)$. Using Theorem \ref{th-1}, it is easy to prove that $(a)\Rightarrow (b)$. By the definition of monodromic singular point, it is obvious that $(b) \Rightarrow (c)\Rightarrow (d)$.

Secondly, we show that $(d) \Rightarrow (c) \Rightarrow (b) \Rightarrow (a)$ as follows.

$(d)\Rightarrow(c)$. By Theorem \ref{th-7}, the index of the origin of the polynomial vector field $b\left(\mathcal{X}\right)$ is $e/2+1$. Since the origin of vector field $\mathcal{X}$ is a center, on the Bendixson sphere $\mathbb{S}^2$ (see Figure \ref{Fig-1}) the south pole $S$ is also a center. This means that there exists a periodic orbit of $b\left(\mathcal{X}\right)$  such that it contains all finite singular points of $b\left(\mathcal{X}\right)$. Let $\bar{c}$ denote the number of finite singular points of $b\left(\mathcal{X}\right)$ other than its origin. By statement $(a)$ of Proposition \ref{pr-5} and Proposition \ref{pr-2}, we have $e/2+1+\bar{c}=1$, that is, $e=\bar{c}=0$. So, the origin is the unique finite singular point of $b\left(\mathcal{X}\right)$. From statement $(c)$ of Proposition \ref{pr-5} and Theorem \ref{th-6} it follows that the origin is a monodromic singular point.

$(c)\Rightarrow (b)$. I'lyashenko in \cite{MR1133882}  and \'{E}calle in \cite{MR1399559} prove that a monodromic singular point of an analytic vector field must be either a center or a focus. Assume that the origin of $b\left(\mathcal{X}\right)$  is a attracting focus. Let $U$ be a sufficiently small neighborhood  of the origin.  Then there exists a orbit $\psi\left(t,\bar{p}\right)$ of $b\left(\mathcal{X}\right)$ passing through the regular point $\bar{p}\in U\setminus\{O\}$ such that $\lim_{t\rightarrow+\infty}\psi\left(t,\bar{p}\right)=O$, that is, $\omega\left(\bar{p}\right)=\{O\}$. By Proposition \ref{pr-4},
$\alpha\left(\bar{p}\right)=\{O\}$ which is a contradiction. Thus the origin of $b\left(\mathcal{X}\right)$ is a center.

$(b)\Rightarrow(a).$ The origin of the polynomial vector field $b\left(\mathcal{X}\right)$ is a center, which means that every solution trajectory of the polynomial vector field $\mathcal{X}$ in a neighborhood of infinity (the equator $\mathbb{S}^1$) is a closed orbit. Thus all finite singular points of $\mathcal{X}$ are contained in a periodic orbit. From Propositions \ref{pr-2}, \ref{pr-3} and \ref{pr-1}, it follows that the $(0,0)$ is the unique finite singular point of $\mathcal{X}$, which is a center. Combining with the behavior of the trajectories of $\mathcal{X}$ near infinity, we obtain that $(0,0)$ is a global center of the vector field $\mathcal{X}$. Using Theorem \ref{th-1},  $F$ is a global diffeomorphism of the plane onto itself.

Finally, it follows from Theorem \ref{th-15} that statement $(e)$ is equivalent to statement $(b)$, that is, $(b) \Leftrightarrow (e)$.

We complete the proof of Theorem \ref{th-3}.
\end{proof}

\section{Proof of Theorem \ref{th-14}}\label{se-6}
Let $l$ be a straight line and a point $E\in l$. The point $E$ is a \emph{contact point} of the straight line $l$ with a vector field \eqref{eq-3} if the vector $\mathscr{X}\left(E\right)$ is parallel to $l$. To prove Theorem \ref{th-14}, we need the following lemmas.
\begin{lemma}\label{le-2}
Assume that vector field \eqref{eq-3} is a polynomial vector field with $\bar{d}=\max\left\{\emph{deg} P,\emph{deg} Q\right\}$.  Let $l$ be a straight line. Then either $l$ is an invariant straight line of $\mathscr{X}$, or $\mathscr{X}$ has at most $\bar{d}$ contact points (including
the  singular points) along $l$.
\end{lemma}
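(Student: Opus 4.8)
The plan is to parametrize the straight line $l$ and reduce the contact condition to a single polynomial equation in one variable, then bound its number of roots by its degree. I would write $l$ in the form $\{(x_0+at,\, y_0+bt): t\in\mathbb{R}\}$ where $(a,b)\neq(0,0)$ is a direction vector of $l$. The point $E=(x_0+at,y_0+bt)$ is a contact point precisely when the vector $\mathscr{X}(E)=(P(E),Q(E))$ is parallel to $l$, i.e. parallel to $(a,b)$. Parallelism of $(P,Q)$ with $(a,b)$ is equivalent to the vanishing of the cross product, so I would define
\begin{align*}
&\Phi(t)=b\,P\left(x_0+at,\,y_0+bt\right)-a\,Q\left(x_0+at,\,y_0+bt\right).
\end{align*}
Then $E$ is a contact point of $l$ with $\mathscr{X}$ if and only if $\Phi(t)=0$. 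Note this automatically includes the singular points, since at a singular point $\mathscr{X}(E)=(0,0)$ is trivially parallel to $l$, so $\Phi$ vanishes there too.

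The next step is to analyze $\Phi(t)$ as a polynomial in the single variable $t$. Since $P$ and $Q$ are polynomials in $(x,y)$ of degree at most $\bar d$, substituting the linear parametrization $x=x_0+at$, $y=y_0+bt$ gives that both $P(x_0+at,y_0+bt)$ and $Q(x_0+at,y_0+bt)$ are polynomials in $t$ of degree at most $\bar d$. Hence $\Phi(t)$ is a polynomial in $t$ of degree at most $\bar d$. Now I would split into two cases. If $\Phi(t)\equiv0$, then every point of $l$ is a contact point, meaning $\mathscr{X}$ is everywhere tangent to $l$ along $l$; this is exactly the statement that $l$ is an invariant straight line of $\mathscr{X}$. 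If $\Phi(t)\not\equiv0$, then $\Phi$ is a nonzero polynomial of degree at most $\bar d$ in $t$, and by the fundamental theorem of algebra it has at most $\bar d$ real roots, giving at most $\bar d$ contact points along $l$.

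The only genuine subtlety, and the step I would be most careful about, is verifying that the case $\Phi(t)\equiv0$ is indeed equivalent to $l$ being invariant, and not merely a line along which the flow is tangent at isolated issues. The equivalence holds because invariance of a straight line for a vector field means precisely that the field is tangent to the line at every point of the line, which is exactly $\Phi(t)=0$ for all $t$; one should note that $(a,b)$ being a genuine direction vector (not the zero vector) guarantees $\Phi$ faithfully detects parallelism. A secondary point worth a sentence is that the degree bound is $\bar d$ rather than $\bar d+1$: although the individual substitutions could a priori produce a degree-$\bar d$ polynomial, the combination $bP-aQ$ does not raise the degree, so $\deg\Phi\le\bar d$ is immediate. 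Everything else is routine: the proof is essentially the observation that restricting a bivariate polynomial of degree $\bar d$ to a line yields a univariate polynomial of degree at most $\bar d$, combined with the standard root count.
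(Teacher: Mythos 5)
Your proof is correct. The paper itself does not supply an argument for this lemma---it simply cites Lemma 8.12 of Dumortier--Llibre--Art\'es \cite{Dumortier2006qualitative}---and your proof (parametrize $l$, observe that $\Phi(t)=b\,P-a\,Q$ restricted to the line is a polynomial of degree at most $\bar{d}$ in $t$, then either $\Phi\equiv0$, which is exactly invariance of $l$, or $\Phi$ has at most $\bar{d}$ roots) is precisely the standard argument given in that reference, including the correct handling of singular points and of the equivalence between $\Phi\equiv0$ and invariance.
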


The proof of Lemma \ref{le-2} can be found in Lemma 8.12 of \cite{Dumortier2006qualitative}.
\begin{lemma}\label{le-3}
Let $G_1\left(x,y\right)$  and $G_2\left(x,y\right)$ be two functions defined on $\mathbb{R}^2$. Then the limit
\begin{align}\label{eq-41}
&\lim_{\left(x,y\right)\rightarrow\left(0,0\right)}\frac{1}{G_1^2\left(\dfrac{x}{x^2+y^2},\dfrac{y}{x^2+y^2}\right)+G_2^2\left(\dfrac{x}{x^2+y^2},\dfrac{y}{x^2+y^2}\right)}
\end{align}
exists if and only if the limit
\begin{align}\label{eq-42}
&\lim_{|x|+|y|\rightarrow+\infty}\frac{1}{G_1^2\left(x,y\right)+G_2^2\left(x,y\right)}
\end{align}
exists. Moreover, these two limits are equal.
\end{lemma}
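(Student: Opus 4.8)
The plan is to recognize the two expressions \eqref{eq-41} and \eqref{eq-42} as the \emph{same} limit seen through the inversion that underlies the Bendixson compactification, and then to reduce the entire statement to an elementary change of variables. First I would introduce the map
\[
T(x,y)=\left(\frac{x}{x^2+y^2},\,\frac{y}{x^2+y^2}\right),\qquad (x,y)\in\mathbb{R}^2\setminus\{(0,0)\},
\]
which is exactly the coordinate change \eqref{eq-4}. I would then record the two identities that make everything work: writing $(u,v)=T(x,y)$ one computes $u^2+v^2=1/(x^2+y^2)$, and a direct substitution gives $T\circ T=\mathrm{id}$. Hence $T$ is an analytic involution of $\mathbb{R}^2\setminus\{(0,0)\}$ onto itself with $\|T(x,y)\|=1/\|(x,y)\|$ in the Euclidean norm. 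In particular $T$ is a homeomorphism carrying each deleted neighborhood of the origin bijectively onto the exterior of a disc, and $(x,y)\to(0,0)$ corresponds precisely to $T(x,y)\to\infty$.

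Next, set $H(u,v)=1/\bigl(G_1^2(u,v)+G_2^2(u,v)\bigr)$ on the set where $G_1^2+G_2^2\neq 0$. Then the fraction in \eqref{eq-41} is exactly $H(T(x,y))$, while the fraction in \eqref{eq-42} is $H(x,y)$. The asserted equivalence thus becomes the statement that $\lim_{(x,y)\to(0,0)}H(T(x,y))$ and $\lim_{\|(x,y)\|\to\infty}H(x,y)$ exist simultaneously and coincide, which is precisely the substitution $(u,v)=T(x,y)$ combined with the correspondence established above. To make this rigorous I would give the $\varepsilon$--$\delta$ argument: assuming the limit in \eqref{eq-42} equals $L$, for given $\varepsilon>0$ choose $M>0$ with $|H(u,v)-L|<\varepsilon$ whenever $\|(u,v)\|>M$, and put $\delta=1/M$; then $0<\|(x,y)\|<\delta$ forces $\|T(x,y)\|=1/\|(x,y)\|>M$, hence $|H(T(x,y))-L|<\varepsilon$, which yields \eqref{eq-41} with the same value $L$. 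The converse implication is identical after applying $T$ once more, using $T=T^{-1}$.

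Finally I would reconcile the two descriptions of ``going to infinity''. The transformation naturally produces the Euclidean quantity $x^2+y^2$, whereas \eqref{eq-42} is phrased with $|x|+|y|$; since all norms on $\mathbb{R}^2$ are equivalent, $\|(x,y)\|\to\infty$ is the same condition as $|x|+|y|\to+\infty$, so nothing is lost in passing between the two formulations. There is no serious obstacle here: the statement is essentially bookkeeping, and its entire content is the observation that $T$ is an involution satisfying $\|T(x,y)\|=1/\|(x,y)\|$. The only points requiring a little care are verifying that $T$ is genuinely a bijective homeomorphism between a punctured neighborhood of the origin and a neighborhood of infinity, so that the correspondence between the two limiting processes is exact and neither creates nor destroys accumulation values, and the harmless replacement of the Euclidean norm by the expression $|x|+|y|$ via norm equivalence.
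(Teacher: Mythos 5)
Your proof is correct and follows essentially the same route as the paper: an $\varepsilon$--$\delta$ argument that transports the limit at infinity to the limit at the origin through the inversion $T(x,y)=\left(\frac{x}{x^2+y^2},\frac{y}{x^2+y^2}\right)$, used in both directions via its involutive character. If anything, your bookkeeping is slightly cleaner: working in the Euclidean norm, where the exact identity $\|T(x,y)\|=1/\|(x,y)\|$ holds, and then passing to $|x|+|y|$ by norm equivalence avoids the two-sided comparisons (with stray factors of $2$) that the paper manipulates directly in the $|x|+|y|$ quantity.
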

\begin{proof}
\emph {Sufficiency.} Assume that
\begin{align*}
&\lim_{|x|+|y|\rightarrow+\infty}\frac{1}{G_1^2\left(x,y\right)+G_2^2\left(x,y\right)}=\mathcal{L}.
\end{align*}
For every $\varepsilon>0$, there exists $M>0$ such that if $|x|+|y|>M$, then
\begin{align}\label{eq-43}
&\left|\frac{1}{G_1^2\left(x,y\right)+G_2^2\left(x,y\right)}-\mathcal{L}\right|<\varepsilon.
\end{align}

Let
$$\left(x,y\right)=\left(\frac{X}{X^2+Y^2},\frac{Y}{X^2+Y^2}\right).$$
Then
$$\left|X\right|+\left|Y\right|\leq\frac{2\left(X^2+Y^2\right)}{\left|X\right|+\left|Y\right|}=\frac{2}{|x|+|y|}.$$
Taking $\delta=2/M$, we have that if $0<|X|+|Y|<\delta$, then
\begin{align}\label{eq-44}
&\left|\frac{1}{G_1^2\left(\frac{X}{X^2+Y^2},\frac{Y}{X^2+Y^2}\right)+G_2^2\left(\frac{X}{X^2+Y^2},\frac{Y}{X^2+Y^2}\right)}-\mathcal{L}\right|<\varepsilon
\end{align}
for every $\varepsilon>0$. Consequently, the limit \eqref{eq-41} exists.

\emph {Necessity.}  Assume that
\begin{align*}
&\lim_{\left(x,y\right)\rightarrow\left(0,0\right)}\frac{1}{G_1^2\left(\dfrac{x}{x^2+y^2},\dfrac{y}{x^2+y^2}\right)+G_2^2\left(\dfrac{x}{x^2+y^2},\dfrac{y}{x^2+y^2}\right)}=\mathcal{L}_1.
\end{align*}
For every $\varepsilon>0$, there exists $\delta_1<0$ such that if $0<|x|+|y|<\delta_1$, then
\begin{align}\label{eq-45}
&\left|\frac{1}{G_1^2\left(\dfrac{x}{x^2+y^2},\dfrac{y}{x^2+y^2}\right)+G_2^2\left(\dfrac{x}{x^2+y^2},\dfrac{y}{x^2+y^2}\right)}-\mathcal{L}_1\right|<\varepsilon.
\end{align}
Setting
$$\left(X_1,Y_1\right)=\left(\frac{x}{x^2+y^2},\frac{y}{x^2+y^2}\right),$$
one can obtain that
$$\left|X_1\right|+\left|Y_1\right|\geq \frac{X_1^2+Y_1^2}{\left|X_1\right|+\left|Y_1\right|}=\frac{1}{|x|+|y|}.$$
Taking $M_1=1/\delta_1$, we get that if $|X_1|+|Y_1|>M_1$, then
\begin{align}\label{eq-46}
&\left|\frac{1}{G_1^2\left(X_1,Y_1\right)+G_2^2\left(X_1,Y_1\right)}-\mathcal{L}_1\right|<\varepsilon
\end{align}
for every $\varepsilon>0$. So, the limit \eqref{eq-42} exists.

This lemma is confirmed.
\end{proof}

The Lemma \ref{le-3} tells us that Theorem \ref{th-14} is equivalent to the following proposition.
\begin{proposition}\label{pr-6}
 Let $F=\left(f,g\right):\mathbb{R}^2\rightarrow \mathbb{R}^2$ be a polynomial map with nowhere zero Jacobian determinant such that $F\left(0,0\right)=\left(0,0\right)$. Then $F$ is a global injective if and only if
\begin{align}\label{eq-48}
&\lim_{\left(x,y\right)\rightarrow\left(0,0\right)}\bar{I}\left(x,y\right)=
\lim_{\left(x,y\right)\rightarrow\left(0,0\right)}\frac{1}{f^2\left(\dfrac{x}{x^2+y^2},\dfrac{y}{x^2+y^2}\right)+g^2\left(\dfrac{x}{x^2+y^2},\dfrac{y}{x^2+y^2}\right)}
\end{align}
exists.
\end{proposition}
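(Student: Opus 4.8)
The plan is to exploit the fact that the criterion integrand is, up to a constant factor, the reciprocal of the pulled-back Hamiltonian. Writing $H=(f^2+g^2)/2$ for the Hamiltonian of $\mathcal X$ and substituting the Bendixson change of coordinates $x=u/(u^2+v^2)$, $y=v/(u^2+v^2)$, one has
\begin{align*}
\bar I(u,v)=\frac{1}{2\,\tilde H(u,v)},\qquad \tilde H(u,v):=H\!\left(\frac{u}{u^2+v^2},\frac{v}{u^2+v^2}\right).
\end{align*}
Since $b(\mathcal X)$ is obtained from $\mathcal X$ by this coordinate change together with a rescaling of time, orbits are preserved as sets, so $\tilde H$ is a first integral of $b(\mathcal X)$ on a punctured neighbourhood of its origin and $\bar I$ is constant along the orbits of $b(\mathcal X)$. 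The whole argument is driven by this observation together with Theorem \ref{th-1} and Proposition \ref{pr-4}.

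For the necessity ($F$ global injective $\Rightarrow$ the limit exists) I would argue as follows. By Theorem \ref{th-1}, injectivity forces $(0,0)$ to be a global center of $\mathcal X$, so the level sets $\{H=c\}$ are nested compact ovals filling $\mathbb R^2$. Because $\mathcal X=(-H_y,H_x)$, its regular points are exactly the points where $\mathrm dH\neq0$; as the origin is the only singular point, $H$ is strictly increasing along any ray leaving the origin, so the compact level ovals exhaust the plane and $H\to+\infty$ as $|x|+|y|\to\infty$. Translating back, $\tilde H\to+\infty$ and hence $\bar I\to0$ as $(u,v)\to(0,0)$; in particular the limit \eqref{eq-48} exists and equals $0$.

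For the sufficiency I would first pin down the value of the limit. Suppose $\lim_{(u,v)\to(0,0)}\bar I=\mathcal L$ exists; since $\bar I\geq0$ we have $\mathcal L\geq0$. If $\mathcal L>0$, then in the original variables $f^2+g^2\to 1/\mathcal L$ as $(x,y)\to\infty$ along every path, so the nonnegative polynomial $f^2+g^2$ has a finite limit at infinity and must be constant; this makes $\mathcal X\equiv\mathbf{0}$, which is impossible. Hence $\mathcal L=0$, i.e. $H\to+\infty$ at infinity, so every level set $\{H=c\}$ is compact and $H$ is proper. By Proposition \ref{pr-1} every finite singular point of $\mathcal X$ is a center, and since $\mathcal X$ is Hamiltonian it has no limit cycles and, centers being neither $\alpha$- nor $\omega$-limits, no polycycles; the Poincar\'e--Bendixson Theorem \ref{th-10} applied on the positively invariant compact sets $\{H\le c\}$ then forces every orbit of $\mathcal X$ to be periodic and bounded. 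Consequently no orbit of $\mathcal X$ escapes to infinity, so by Proposition \ref{pr-4} a punctured neighbourhood of the origin of $b(\mathcal X)$ is entirely filled with periodic orbits, each necessarily encircling the origin (the only singular point near $N$). Thus the origin of $b(\mathcal X)$ is a center, and Theorem \ref{th-3} yields that $F$ is global injective.

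The main obstacle is this last direction: upgrading the analytic statement ``$H\to+\infty$ at infinity'' to the dynamical statement ``the origin of $b(\mathcal X)$ is a center.'' The tempting shortcut---extending $\bar I$ continuously by $\bar I(O)=0$ to obtain a first integral with an isolated minimum and invoking Theorem \ref{th-15}---fails, because a direct computation gives $\bar I=(u^2+v^2)^{2d}/\Phi(u,v)$ with $\Phi(0,0)=0$, so $\bar I$ presents an indeterminate form at the origin and need not be $C^k$ there. This is precisely why the argument must route through the global phase portrait (properness of $H$, absence of limit cycles and polycycles, and Proposition \ref{pr-4}) rather than through the integrability criterion, and carefully justifying that every bounded orbit is periodic is the crux of the proof.
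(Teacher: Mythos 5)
Your sufficiency direction ($\mathcal{L}$ exists $\Rightarrow$ $F$ injective) is essentially sound, but your necessity direction contains a genuine gap. It breaks at the step ``$H$ is strictly increasing along any ray leaving the origin, so \ldots $H\to+\infty$''. First, $\mathrm{d}H\neq 0$ away from the origin does not make $H$ monotone along rays: the level ovals of a global center need not be star-shaped with respect to the origin, so a ray can be tangent to (or cross several times) one and the same oval, and the radial derivative of $H$ vanishes or changes sign there. Second, and more seriously, even granting monotonicity the conclusion is a non sequitur: the function $H(x,y)=\left(x^2+y^2\right)/\left(1+x^2+y^2\right)$ has nested compact level ovals filling the plane, no critical points off the origin, is strictly increasing along every ray, its Hamiltonian field has a global center at the origin --- and yet $H$ is bounded. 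So none of the properties you invoke implies properness of $H$; but properness ($f^2+g^2\to\infty$, equivalently $\bar I\to 0$ by Lemma \ref{le-3}) \emph{is} exactly what this direction asks you to prove. The gap is easy to close, but you must use something beyond the phase portrait: e.g.\ by Theorem \ref{th-1} injectivity makes $F$ a homeomorphism of $\mathbb{R}^2$ onto itself, hence a proper map (preimages of compact sets are compact because $F^{-1}$ is continuous), hence $f^2+g^2\to\infty$ at infinity. The paper instead argues dynamically: by the radial computation \eqref{eq-38}, $\bar I\to 0$ along one ray $l$; by Lemma \ref{le-2} a small segment of $l$ is a transverse section of $b\left(\mathcal{X}\right)$; and since the origin of $b\left(\mathcal{X}\right)$ is a center, every point of a small punctured neighbourhood lies on a periodic orbit that must cross this segment close to the origin, so constancy of the first integral $\bar I$ along orbits propagates the radial limit $0$ to a genuine two-dimensional limit.

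On sufficiency, your route is correct but genuinely different from the paper's, and your closing criticism of the ``tempting shortcut'' misreads what the paper actually does. The paper shows the limit must be $0$ (same radial argument as yours, via nonvanishing of the top-degree terms rather than via bounded polynomials being constant), extends $\bar I$ to $\tilde I$ with $\tilde I\left(O\right)=0$, and uses only the \emph{continuity} of this first integral: an orbit tending to or leaving the origin would have $\tilde I\equiv 0$ along it, impossible since $\bar I>0$ off the origin; hence no orbit reaches the origin, the origin of $b\left(\mathcal{X}\right)$ is monodromic, and Theorem \ref{th-3}$(c)$ finishes. This never invokes Theorem \ref{th-15}, so the lack of $C^k$ smoothness of $\tilde I$ at the origin is irrelevant to it. Your alternative --- $\mathcal{L}=0$ gives $H$ proper, then Poincar\'e--Bendixson on the compact invariant sets $\left\{H\le c\right\}$ together with the absence of limit cycles (Hamiltonian) and of polycycles (centers cannot be limit sets) forces all orbits of $\mathcal{X}$ to be periodic, whence the origin of $b\left(\mathcal{X}\right)$ is a center --- is valid and lands on statement $(b)$ of Theorem \ref{th-3} instead of $(c)$, but it is considerably heavier than the paper's continuity trick, and the ``every bounded orbit is periodic'' step you correctly flag as the crux is precisely what the paper's argument avoids having to prove.
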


\begin{proof}
\emph {Sufficiency.} Let $f\left(x,y\right)=\sum\nolimits_{i=1}^{n}f_i\left(x,y\right)$, $g\left(x,y\right)=\sum\nolimits_{j=1}^{m}g_j\left(x,y\right)$ and $d=\max\{n,m\}$, where $f_i\left(x,y\right)$ and $g_j\left(x,y\right)$ are homogeneous polynomials with degree $i$ and $j$, respectively. Then there exists a $\theta_0\in\left[0,2\pi\right]$ such that $f_d^2\left(\cos\theta_0,\sin\theta_0\right)+g_d^2\left(\cos\theta_0,\sin\theta_0\right)\neq0$. Otherwise, $f_d\left(x,y\right)=g_d\left(x,y\right)\equiv0$. We have
\begin{equation}\label{eq-38}
\footnotesize
 \begin{split}
  \lim_{r\rightarrow 0}\bar{I}\left(r\cos\theta_0,r\sin\theta_0\right)&=\lim_{r\rightarrow 0}\frac{1}{f^2\left(\frac{\cos\theta_0}{r},\frac{\sin\theta_0}{r}\right)+g^2\left(\frac{\cos\theta_0}{r},\frac{\sin\theta_0}{r}\right)}\\
&=\lim_{r\rightarrow0}\frac{r^{2d}}{\sum\limits_{i=1}^d\sum\limits_{j=1}^dr^{2d-i-j}\left[f_i\left(\cos\theta_0,\sin\theta_0\right)
f_{j}\left(\cos\theta_0,\sin\theta_0\right)+g_i\left(\cos\theta_0,\sin\theta_0\right)
g_{j}\left(\cos\theta_0,\sin\theta_0\right)\right]}\\
&=0.
 \end{split}
\end{equation}
Therefore, $\bar{I}(x,y)\rightarrow 0$ as $(x,y)\rightarrow(0,0)$ along the straight line $$l:=\left\{(x,y):x=r\cos\theta_0,y=r\sin\theta_0,r\in\mathbb{R}\right\}.$$
Since the limit $\lim_{(x,y)\rightarrow(0,0)}\bar{I}(x,y)$ exists, we obtain
\begin{align}\label{eq-10}
&\lim_{(x,y)\rightarrow(0,0)}\bar{I}(x,y)=\lim_{
\begin{matrix}
(x,y)\rightarrow(0,0)\\
\text{\small Along straight line}\;l
\end{matrix}
}\bar{I}(x,y)=0.
\end{align}

 The Hamiltonian vector field \eqref{eq-1} has the Hamiltonian
\begin{align}\label{eq-2}
  &H\left(x,y\right)=\frac{f^2\left(x,y\right)+g^2\left(x,y\right)}{2}.
\end{align}
By Proposition 1.2 of \cite{MR3642375}, the vector field $b\left(\mathcal{X}\right)$  has the first integral
$$\bar{I}\left(u,v\right)=\frac{1}{f^2\left(\dfrac{u}{u^2+v^2},\dfrac{v}{u^2+v^2}\right)+g^2\left(\dfrac{u}{u^2+v^2},\dfrac{v}{u^2+v^2}\right)}$$
with $u^2+v^2\neq0$. It is easy to check that
\begin{align}\label{eq-26}
&\tilde{I}\left(u,v\right)=
\begin{cases}
\bar{I}\left(u,v\right),\quad \left(u,v\right)\neq\left(0,0\right),\\
  0,\quad \left(u,v\right)=\left(0,0\right),
\end{cases}
\end{align}
is also first integral of vector field $b\left(\mathcal{X}\right)$. Since
$$\lim_{(u,v)\rightarrow(0,0)}\bar{I}(u,v)=0,$$
$\tilde{I}\left(u,v\right)$ is continuous on $\mathbb{R}^2$.

Let $\gamma(t)=\left(u(t),v(t)\right)$ be an orbit of $b\left(\mathcal{X}\right)$ tending to the origin as $t\rightarrow+\infty$ (or $t\rightarrow-\infty$). There exists a constant $C$ such that $\tilde{I}\left(\gamma(t)\right)=\tilde{I}\left(u(t),v(t)\right)=C$. Since $\tilde{I}$ is a continuous function on $\mathbb{R}^2$, $C=\lim_{t\rightarrow+\infty}\tilde{I}\left(\gamma(t)\right)=\lim_{t\rightarrow+\infty}\tilde{I}\left(u(t),v(t)\right)=0$. This means that $\gamma(t)=\left(u(t),v(t)\right)=(0,0)$. There are no orbits tending or leaving the origin. Thus, the origin of $b\left(\mathcal{X}\right)$ is a monodromic singular point. By statement $(c)$ of Theorem \ref{th-3}, $F$ is a global injective.

\emph {Necessity.} If $F$ is a global injective, then the origin of $b\left(\mathcal{X}\right)$ is a center (also a global center).

From the equation \eqref{eq-38}, we know that $\bar{I}(x,y)\rightarrow 0$ as $(x,y)\rightarrow(0,0)$ along the straight line $l$, that is, $\lim_{r\rightarrow 0}\bar{I}\left(r\cos\theta_0,r\sin\theta_0\right)=0$. For every $\varepsilon>0$, there exists $\delta_1>0$ such that if $0<r<\delta_1$, then
\begin{align}\label{eq-40}
&\left|\bar{I}\left(r\cos\theta_0,r\sin\theta_0\right)\right|<\varepsilon.
\end{align}

 Applying Lemma \ref{le-2}, there exists a $\delta_2>0$ such that the line segment $\overline{OA}=\{\left(r\cos\theta_0,r\sin\theta_0\right):r\in\left[0,\delta_2\right]\}\subset l$ is a transverse section of $b\left(\mathcal{X}\right)$. Let $\delta_3=\min\left\{\delta_1/2,\delta_2/2\right\}$ and $\psi\left(t,\bar{p}\right)$ be the trajectory of vector field $b\left(\mathcal{X}\right)$   passing through the point $\bar{p}=\left(\delta_3\cos\theta_0,\delta_3\sin\theta_0\right)$ for $t\in\mathbb{R}$. Then $\psi\left(t,\bar{p}\right)$ is a periodic orbit. Let $\mathcal{R}$ be a closed region bounded by the $\psi\left(t,\bar{p}\right)$. There exists a neighborhood
$$\mathring{U}\left(O\right)\triangleq\left\{\left(u,v\right):0<u^2+v^2<\delta\right\}$$
such that $\mathring{U}\left(O\right)\subsetneqq\mathcal{R}$.

By definition of the first integral, we have $\bar{I}\left(u_0,v_0\right)=\bar{I}\left(\psi\left(t,\bar{p}_0\right)\right)$ for each $\bar{p}_0=\left(u_0,v_0\right)\in\mathring{U}$ and $t\in\mathbb{R}$. Since $\psi\left(t,\bar{p}_0\right)$ is a periodic orbit and the line segment $\overline{OA}$ is a transverse section, then there exists time $\bar{t}$ such that $\psi\left(\bar{t},\bar{p}_0\right)$ intersects $\overline{OA}$ at the point $\hat{p}_0=\left(\hat{r}\cos\theta_0,\hat{r}\sin\theta_0\right)$ with $0<\hat{r}<\delta_3$, see Figure \ref{fig2}. Using equation \eqref{eq-40}, we get
$$\left|\bar{I}\left(u_0,v_0\right)=\bar{I}\left(\psi\left(t,\bar{p}_0\right)\right)=\bar{I}\left(\hat{r}\cos\theta_0,\hat{r}\sin\theta_0\right)\right|<\varepsilon,$$
that is, $\left|\bar{I}\left(u_0,v_0\right)\right|<\varepsilon$ for arbitrary $\left(u_0,v_0\right)\in\mathring{U}$.

Based on the analysis above, for every $\varepsilon>0$, there exists a neighborhood $\mathring{U}$ such that if $\left(u,v\right)\in\mathring{U}$, then $\left|\bar{I}\left(u,v\right)\right|<\varepsilon$. Therefore, the $\lim_{(u,v)\rightarrow(0,0)}\bar{I}(u,v)$ exists.

This completes the proof of Proposition \ref{pr-6}.
\end{proof}
\begin{figure}[H]
\centering
\begin{minipage}{0.3\linewidth}
\centering
\psfrag{0}{$O$}
\psfrag{1}{$\psi\left(t,\bar{p}\right)$}
\psfrag{2}{$\psi\left(t,\bar{p}_0\right)$}
\psfrag{3}{$\mathcal{R}$}
\psfrag{4}{$\mathring{U}$}
\centerline{\includegraphics[width=1\textwidth]{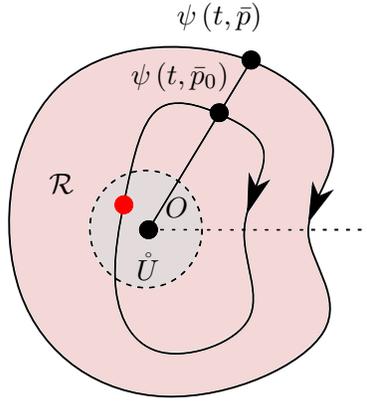}}
\end{minipage}
\caption{The closed region $\mathcal{R}$ and neighborhood $\mathring{U}$.}\label{fig2}
\end{figure}

\begin{proof}[\bf Proof of Theorem \ref{th-14}]
The Theorem \ref{th-14} follows by Proposition \ref{pr-6}.
\end{proof}

\section{Proofs of Theorems \ref{th-16} and \ref{th-13}}\label{se-4}
Our main aim of this section is to prove Theorems \ref{th-16} and to give an alternate proof of Theorem \ref{th-13}. Central to the proof of the Theorem \ref{th-16} is the following lemma, see  Exercise 4 of page 171 of \cite{MR787404} or Theorem 2.2 of \cite{MR3319979}

\begin{lemma}\label{le-4}
 Let $F:\mathbb{R}^n\rightarrow\mathbb{R}^n$ be a $C^1$-map and $\emph{det}DF\left(\mathbf{x}\right)\neq0$ on $\mathbb{R}^n$. Then $F$ is a homeomorphism onto $\mathbb{R}^n$  if and only if $\parallel F\left(\mathbf{x}\right)\parallel\rightarrow\infty$ as $\parallel\mathbf{x}\parallel\rightarrow\infty$.
\end{lemma}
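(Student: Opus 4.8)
The plan is to prove the two implications of the equivalence separately, handling the ``only if'' direction by an elementary sequential argument and the ``if'' direction by the covering-space machinery underlying the Hadamard--Caccioppoli global inversion theorem.

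For the direction ``$F$ a homeomorphism $\Rightarrow$ the growth condition'', I would argue by contradiction. If $F$ is a homeomorphism onto $\mathbb{R}^n$, then $F^{-1}$ is continuous. Were $\|F(\mathbf{x})\|$ not to tend to infinity, there would exist $R>0$ and a sequence $\mathbf{x}_k$ with $\|\mathbf{x}_k\|\to\infty$ but $\|F(\mathbf{x}_k)\|\le R$. The bounded sequence $F(\mathbf{x}_k)$ would then have a convergent subsequence $F(\mathbf{x}_{k_j})\to\mathbf{y}^\ast$, and continuity of $F^{-1}$ would force $\mathbf{x}_{k_j}=F^{-1}(F(\mathbf{x}_{k_j}))\to F^{-1}(\mathbf{y}^\ast)$, contradicting $\|\mathbf{x}_{k_j}\|\to\infty$.

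For the converse I would proceed in stages. First, since $\det DF(\mathbf{x})\neq0$ everywhere, the inverse function theorem makes $F$ a $C^1$ local diffeomorphism, hence an open map and a local homeomorphism. Second, the growth hypothesis makes $F$ \emph{proper}: for compact (hence bounded) $K$, the set $F^{-1}(K)$ is closed by continuity and bounded (a sequence in it with norms tending to infinity would have images trapped in the bounded set $K$, contradicting $\|F(\mathbf{x})\|\to\infty$), hence compact. Because a proper map into a locally compact Hausdorff space is closed while a local homeomorphism is open, the image $F(\mathbb{R}^n)$ is clopen and nonempty (it contains $F(\mathbf{0})=\mathbf{0}$), so $F$ is surjective by connectedness of $\mathbb{R}^n$.

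Third, I would show that a proper local homeomorphism $F:\mathbb{R}^n\to\mathbb{R}^n$ is a covering map. For each $\mathbf{y}$ the fiber $F^{-1}(\mathbf{y})$ is discrete (local homeomorphism) and compact (properness), hence finite, say $\{\mathbf{p}_1,\dots,\mathbf{p}_r\}$; choosing disjoint neighborhoods $W_i$ of the $\mathbf{p}_i$ on which $F$ restricts to a homeomorphism and using properness to shrink a neighborhood $U\ni\mathbf{y}$ so that $F^{-1}(U)\subset\bigcup_i W_i$ produces an evenly covered neighborhood of $\mathbf{y}$. Finally, since the base $\mathbb{R}^n$ is simply connected and the total space $\mathbb{R}^n$ is connected, the covering is one-sheeted (a connected covering of a simply connected space is a homeomorphism), so $F$ is a bijective, continuous, open map, i.e. a homeomorphism onto $\mathbb{R}^n$. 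The main obstacle is precisely this third stage: verifying the evenly covered neighborhood condition requires the careful use of properness to keep the sheets from escaping to infinity, which is the technical heart of the Hadamard--Caccioppoli argument; the reduction to a covering of a simply connected base then finishes the proof cleanly.
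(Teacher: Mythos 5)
Your proof is correct, but note that the paper does not actually prove this lemma at all: it is imported verbatim from the literature (Exercise 4, p.~171 of Deimling's \emph{Nonlinear Functional Analysis}, or Theorem 2.2 of Ruzhansky--Sugimoto), so there is no internal argument to compare against. What you have written out is precisely the standard Hadamard--Caccioppoli proof that underlies those references: the easy direction via continuity of $F^{-1}$ plus Bolzano--Weierstrass; the hard direction via the chain ``nonvanishing Jacobian $\Rightarrow$ local diffeomorphism $\Rightarrow$ open map'', ``coercive $\Rightarrow$ proper $\Rightarrow$ closed map'', hence surjectivity by connectedness of $\mathbb{R}^n$, followed by the properness argument showing a proper local homeomorphism is a covering map, and one-sheetedness over the simply connected base. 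All of these steps are sound, and the step you flag as the technical heart --- using properness to confine $F^{-1}(U)$ to the disjoint sheets $W_i$ so as to produce an evenly covered neighborhood --- is indeed where the real work lies (the standard argument: if no such $U$ existed, one could extract points $\mathbf{x}_k\in F^{-1}(\mathbf{y}_k)\setminus\bigcup_i W_i$ with $\mathbf{y}_k\to\mathbf{y}$, and properness applied to the compact set $\{\mathbf{y}_k\}\cup\{\mathbf{y}\}$ yields a subsequential limit $\mathbf{x}^\ast$ in the fiber over $\mathbf{y}$ yet outside every $W_i$, a contradiction). One cosmetic slip: the lemma does not assume $F(\mathbf{0})=\mathbf{0}$, so justify nonemptiness of the image simply by $F(\mathbf{0})\in F(\mathbb{R}^n)$ rather than asserting it contains the origin; this has no effect on the argument. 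In short, your proposal supplies a self-contained proof of a fact the paper merely cites, which is a genuine addition rather than a deviation.
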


\begin{proof}[\bf Proof of Theorem \ref{th-16}]
  The Theorem \ref{th-16} holds immediately by Lemma \ref{le-4}.
\end{proof}

The rest of this section is to present an alternate proof of Theorem \ref{th-13}.
\begin{lemma}\label{le-6}
  Let $\mathbf{s}=\left(s_1,\ldots,s_n\right)\in\mathbb{N}_+^n$ and $\sum_{j=1}^ny_j^2=1$. If
  \begin{align*}
&x_1=\frac{y_1}{r^{s_1}},\ldots,x_n=\frac{y_n}{r^{s_n}},
  \end{align*}
  then $\sum_{j=1}^nx_j^2\rightarrow\infty$ if and only if $r\rightarrow0$.
\end{lemma}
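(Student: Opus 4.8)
The plan is to regard the quantity $\sum_{j=1}^n x_j^2$ as an explicit scalar function of the single variable $r$, with $\mathbf{y}=(y_1,\ldots,y_n)$ held fixed on the unit sphere, and to read off the claim from its monotonic behaviour. Substituting the definition $x_j=y_j/r^{s_j}$ gives
\begin{align*}
\Phi(r)\triangleq\sum_{j=1}^n x_j^2=\sum_{j=1}^n\frac{y_j^2}{r^{2s_j}},
\end{align*}
a nonnegative continuous function that depends only on $r^2$, so I may assume $r>0$. The normalisation $\sum_{j=1}^n y_j^2=1$ will be the only structural input I use, and it guarantees that $\Phi$ does not degenerate to the constant $0$.

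For the implication $r\to0\Rightarrow\Phi(r)\to\infty$, I would squeeze $\Phi$ between two pure powers of $r$. Write $\sigma=\min\{s_1,\ldots,s_n\}$ and $S=\max\{s_1,\ldots,s_n\}$, both at least $1$. For $0<r<1$ one has $r^{-1}>1$, so $r^{-2s_j}$ is increasing in $s_j$ and hence $r^{-2\sigma}\le r^{-2s_j}\le r^{-2S}$ for every $j$. Multiplying by $y_j^2\ge0$, summing, and using $\sum_{j=1}^n y_j^2=1$ yields
\begin{align*}
r^{-2\sigma}\le\Phi(r)\le r^{-2S},\qquad 0<r<1.
\end{align*}
Since $\sigma\ge1$, the left-hand bound tends to $+\infty$ as $r\to0^+$, so $\Phi(r)\to\infty$ by comparison. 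Note that this step needs no case analysis on which components $y_j$ vanish: the normalisation converts the uniform lower estimate $r^{-2s_j}\ge r^{-2\sigma}$ directly into the bound $r^{-2\sigma}$.

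For the converse I would argue by contraposition, reusing the same type of estimate in the opposite direction. If $r$ stays bounded away from $0$, say $r\ge c$ for some $c\in(0,1]$, then $r^{-2s_j}\le c^{-2s_j}$, whence $\Phi(r)\le\sum_{j=1}^n y_j^2c^{-2s_j}\le\max_j c^{-2s_j}<\infty$, so $\Phi$ remains bounded and cannot tend to infinity. Equivalently, one computes $\Phi'(r)=-2\sum_{j=1}^n s_jy_j^2r^{-2s_j-1}<0$, so that $\Phi$ is a strictly decreasing homeomorphism of $(0,\infty)$ onto $(0,\infty)$ with $\Phi(0^+)=+\infty$ and $\Phi(+\infty)=0$; then $\Phi(r)\to\infty$ is equivalent to $r=\Phi^{-1}(\Phi(r))\to0^+$. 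I do not expect a genuine obstacle here: the only points demanding care are the direction of the monotonicity of $r^{-2s_j}$ in $s_j$ (valid precisely because $r<1$) and producing the reverse estimate that bounds $\Phi$ from above, which again rests on $\sum_{j=1}^n y_j^2=1$.
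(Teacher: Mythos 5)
Your proof is correct and follows essentially the same route as the paper: the paper orders the weights $s_1\geq\cdots\geq s_n$ and establishes exactly your sandwich $r^{-2s_n}\leq\sum_{j=1}^n x_j^2\leq r^{-2s_1}$ for $0<r<1$ (using $\sum_{j=1}^n y_j^2=1$), concluding that "the lemma follows." You merely spell out what the paper leaves implicit, namely the converse direction via contraposition (boundedness of $\Phi$ when $r$ stays away from $0$), plus an optional monotonicity argument.
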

\begin{proof}
  Without loss of generality, we can assume $s_1\geq\cdots\geq s_n$. For $1>r>0$, one can obtain that
  \begin{align*}
  &\frac{1}{r^{2s_1}}\geq\sum_{j=1}^nx_j^2\geq\frac{1}{r^{2s_n}}.
  \end{align*}
  From this inequation, the lemma follows.
\end{proof}

\begin{proof}[\bf Proof of Theorem \ref{th-13}]
 By Theorem \ref{th-16}, it is enough to prove that
$$\lim_{\parallel \mathbf{x} \parallel\rightarrow \infty}\parallel F\left(\mathbf{x}\right)\parallel=\infty.$$
For convenience, the norm $\parallel\cdot\parallel$ here can take square norm due to the fact that the norms in finite dimensional space are all equivalent. Let $f^i\left(\mathbf{x}\right)=\sum_{l=1}^{m_i}f_l^i\left(\mathbf{x}\right)$ with $i=1,\ldots,n$,  where $f_l^i\left(\mathbf{x}\right)$ is quasi-homogeneous of weighted degree $l$ with respect to weight exponents $\mathbf{s}=\left(s_1,\ldots,s_n\right)$. Here, $F_{\mathbf{s}}\left(\mathbf{x}\right)=\left(f_{m_1}^1,\ldots,f_{m_n}^n\right)$.

Without loss of generality, we can assume $m_1\geq\cdots\geq m_n$. Consider the change of coordinates
$$x_1=\frac{y_1}{r^{s_1}},\ldots,x_n=\frac{y_n}{r^{s_n}}$$
with $r>0$ and $\mathbf{y}=\left(y_1,\ldots,y_n\right)\in\mathbb{S}^{n-1}=\left\{\mathbf{y}:\parallel\mathbf{y}\parallel=1\right\}$.

By Lemma \ref{le-6}, we have
\begin{align}\label{eq-12}
&\lim_{\parallel\mathbf{x}\parallel\rightarrow\infty}\frac{1}{\parallel F\left(\mathbf{x}\right)\parallel}
=\lim_{r\rightarrow0}\frac{1}{\sum\limits_{i=1}^n\left(\sum\limits_{l=1}^{m_i}\frac{1}{r^l}f_l^i\left(\mathbf{y}\right)\right)^2}
=\lim_{r\rightarrow0}\frac{r^{2m_1}}{\sum\limits_{i=1}^n\left(\sum\limits_{l=1}^{m_i}r^{m_1-l}f_l^i\left(\mathbf{y}\right)\right)^2}
\end{align}
with $\mathbf{y}\in\mathbb{S}^{n-1}$. Note that
\begin{align}\label{eq-13}
&\left(\sum\limits_{l=1}^{m_i}r^{m_1-l}f_l^i\left(\mathbf{y}\right)\right)^2
=r^{2m_1-2m_i}\left(\sum\limits_{l=1}^{m_i}r^{m_i-l}f_l^i\left(\mathbf{y}\right)\right)^2.
\end{align}
Since
\begin{align*}
&\lim_{r\rightarrow0}\left(\sum\limits_{l=1}^{m_i}r^{m_i-l}f_l^i\left(\mathbf{y}\right)\right)^2=\left(f_{m_i}^i\left(\mathbf{y}\right)\right)^2,
\end{align*}
there exists a $\delta_i>0$ such that
\begin{align}\label{eq-14}
&\frac{1}{2}\left(f_{m_i}^i\left(\mathbf{y}\right)\right)^2<\left(\sum\limits_{l=1}^{m_i}r^{m_i-l}f_l^i\left(\mathbf{y}\right)\right)^2
\end{align}
for all $0<r<\delta_i$ and $\mathbf{y}\in\mathbb{S}^{n-1}$.

Let $\delta=\min\left\{1/2,\delta_1,\ldots,\delta_n\right\}<1$. Then
\begin{align*}
&0<\frac{r^{2m_1}}{\sum\limits_{i=1}^n\left(\sum\limits_{l=1}^{m_i}r^{m_1-l}f_l^i\left(\mathbf{y}\right)\right)^2}<
\frac{2r^{2m_1}}{\sum\limits_{i=1}^nr^{2m_1-2m_i}\left(f_{m_i}^i\left(\mathbf{y}\right)\right)^2}\leq
\frac{2r^{2m_n}}{\sum\limits_{i=1}^n\left(f_{m_i}^i\left(\mathbf{y}\right)\right)^2}
=\frac{2r^{2m_n}}{\parallel F_{\mathbf{s}}\left(\mathbf{y}\right)\parallel}
\end{align*}
for all $0<r<\delta$ and $\mathbf{y}\in\mathbb{S}^{n-1}$. Since $F_{\mathbf{s}}\left(\mathbf{x}\right)=\mathbf{0}$ has only the trivial solution $\mathbf{x}=\mathbf{0}$ and $\parallel F_{\mathbf{s}}\left(\mathbf{y}\right)\parallel$ is continuous on $\mathbf{y}\in\mathbb{S}^{n-1}$, there exists a $L>0$ such that $L\leq \parallel F_{\mathbf{s}}\left(\mathbf{y}\right)\parallel$ for all $\mathbf{y}\in\mathbb{S}^{n-1}$. Therefore,
$$0\leq\lim_{\parallel\mathbf{x}\parallel\rightarrow\infty}\frac{1}{\parallel F\left(\mathbf{x}\right)\parallel}\leq\lim_{r\rightarrow0}\frac{2r^{2m_n}}{\parallel F_{\mathbf{s}}\left(\mathbf{y}\right)\parallel}=0,\quad\text{that is,}\quad\lim_{\parallel \mathbf{x} \parallel\rightarrow \infty}\parallel F\left(\mathbf{x}\right)\parallel=\infty.$$
This ends the proof.
\end{proof}

\section*{Acknowledgments}
The authors would like to thank Professor Changjian Liu for his valuable suggestions and comments (for example, the information on Lemma \ref{le-4} and the proof details of Theorem \ref{th-13}).

This research is supported by the National Natural Science Foundation of China (No.11971495 and No.11801582) and China Scholarship Council (No.
201906380022).

\end{document}